\newtheorem{teo}{Theorem}[section]
\newtheorem*{teo*}{Theorem}
\newtheorem{lem}[teo]{Lemma}
\newtheorem{pro}[teo]{Proposition}
\theoremstyle{definition}
\newtheorem{fed}[teo]{Definition}
\theoremstyle{remark}
\newtheorem{rem}[teo]{Remark}
\def\N{\mathbb{N}}
\def\R{\mathbb{R}}
\def\C{\mathbb{C}}
\def\cB{\mathcal{B}}
\def\cC{\mathcal{C}}
\def\cF{\mathcal{F}}
\def\cH{\mathcal{H}}
\def\cI{\mathcal{I}}
\def\cK{\mathcal{K}}
\def\cL{\mathcal{L}}
\def\cO{\mathcal{O}}
\def\cP{\mathcal{P}}
\def\cQ{\mathcal{Q}}
\def\cU{\mathcal{U}}
\def\noi{\noindent}
\def\bdem{\begin{proof}}
\def\edem{\renewcommand{\qed}{\hfill $\blacksquare$}
\end{proof}}
\DeclareMathOperator{\ran}{ran}
\DeclareMathOperator{\rank}{rank}
\newcommand{\PI}[2]{\left\langle #1 , #2 \right\rangle}
\DeclareMathOperator*{\smayo}{\prec}
\title{Global symmetric approximation of frames}
\author{Eduardo Chiumiento}
\date{}
\begin{document}

\maketitle 

\begin{abstract}
\noi We solve the problem of best approximation by Parseval frames   to an arbitrary frame in a  subspace of an infinite dimensional Hilbert space. We explicitly describe all the solutions and we give a criterion for uniqueness. This best approximation problem was previously solved under an additional assumption on the set of Parseval frames in  M. Frank, V. Paulsen, T. Tiballi, {\it Symmetric approximation of frames and bases in Hilbert spaces}, Trans. Amer. Math. Soc. 354 (2002), 777-793. Our proof relies on the geometric structure of the set of all Parseval frames quadratically close to a given frame. In the process we show that its connected components can be parametrized by using the notion of index of  a pair of projections, and we prove existence and uniqueness results of best approximation by Parseval frames restricted to these connected components. \footnote{\textit{2010 Mathematics Subject Classification.} 42C99, 46C05, 42C15,  47B10.

 \textit{Keywords.} Symmetric approximation; Frame;  Hilbert space;  Hilbert-Schmidt operator; Index of a pair of projections; Partial isometry;  Löwdin orthogonalization.
}  
\end{abstract}

\section{Introduction}

Let $\cH$ be an infinite dimensional separable Hilbert space. A \textit{frame} in a closed subspace $\cK \subseteq \cH$ is a sequence $\{ f_i \}$ of vectors in $\cK$ with the property that there are constants $A,B>0$ such that
$$
A \| f\|^2 \leq \sum_{i=1}^\infty |\PI{f}{f_i}|^2 \leq B \| f\|^2
$$
for all $f \in \cK$. 
 The  frame is a  \textit{Parseval frame} \textit{(or normalized tight frame)} if $A=B=1$. In 2002, Frank, Paulsen and Tiballi \cite{FPT02} 
introduced the concept of \textit{symmetric approximation of frames}. Essentially, this consists in finding the closest Parseval frame to a given frame. However, one cannot consider arbitrary Parseval frames in this best approximation problem: one has to restrict to the class of Parseval frames which are \textit{weakly similar}  to the given frame (a notion explained below). Although  symmetric approximation of frames without the weak similarity assumption has recently been considered in finite dimensions \cite{AC16}, it has never been studied in infinite dimensional Hilbert spaces without this extra assumption. This is the aim of the current paper.

Frames have been employed in signal processing in the recent decades, though their definition was conceived by  Duffin and Schaeffer \cite{DS52} to deal with problems in non-harmonic Fourier series.  Nowadays frame theory has evolved into an active area of research in mathematics with numerous applications including   image and data compression, filter banks, coding theory, optics and quantum computing  (see e.g. \cite{ CK12, K1, K2, AF97}), and it also fruitful interacts with other areas of mathematics such as operator algebras, operator theory and Banach space theory (see \cite{C00, Chrbook, HL00}).  In finite dimensional Hilbert spaces, the definition of frames reduces to the notion of spanning sets. Both notions do not coincide in infinite dimensional Hilbert spaces, but in any case frames are generally redundant and stable spanning sets, and these are the key properties which make them useful to applications.

The usual Gram-Schmidt process to find an orthonormal basis from a given set of vectors is an order-dependent method.  It is not suitable to apply with frames, where in many cases it is not clear how to order the vectors, and it also would give an orthonormal basis, loosing the redundancy property of frames. Motivated by the \textit{L\"owdin orthogonalization method} in quantum chemistry \cite{L70, AEG80}, which approximates an arbitrary basis by an orthonormal basis, the authors in \cite{FPT02} introduced the method of symmetric approximation of frames  to approximate arbitrary frames by Parseval frames.  As we mentioned above, a frame $\{ f_i\}$ in a subspace $\cK \subseteq \cH$ is a Parseval frame if satisfies Parseval's identity 
$$
\|f\|^2=\sum_{i=1}^\infty |\PI{f}{f_i}|^2
$$
for all $f \in \cK$. One of the main features of Parseval frames is the following reconstruction formula: $f=\sum_{i=1}^\infty \PI{f}{f_i}f_i$, for all $f \in \cK$, which turns out to be more complicated in the case of general frames.   
On the other hand, symmetric approximation seems to be the appropriate generalization of L\"owdin orthogonalization to the frame setting by the following well-known fact: Parseval frames can be obtained by projecting orthonormal bases from a larger Hilbert space, while arbitrary frames are given by projecting Riesz bases.




The precise definition of symmetric approximation   is as follows. First  recall that two frames $\{ f_i \}$ and $\{ g_i\}$ in subspaces $\cK$ and $\cL$, respectively, are called \textit{weakly similar} if there is an invertible bounded linear operator $T: \cK \to \cL$ such that $Tf_i=g_i$, for all $i\geq 1$. Then a Parseval frame $\{ y_i \}$ 
is said to be a \textit{symmetric approximation} of a frame $\{ f_i \}$  in a subspace $\cK \subseteq \cH$ if 
$$
\sum_{i=1}^\infty \| f_i -y_i \|^2 \leq \sum_{i=1}^\infty \| f_i -x_i \|^2
$$  
for all Parseval frames $\{ x_i \}$ in subspaces of $\cH$ which are weakly similar to $\{ f_i \}$. In order to avoid non sense comparisons the left hand of this inequality has to be finite.

We can reformulate symmetric approximation as a best approximation problem of closed range operators by partial isometries. To this end, recall that the \textit{synthesis operator}  of a frame $\{ f_i\}$ in $\cK\subseteq \cH$ is the bounded operator defined by
$$
F: \ell^2 \to \cH, \, \, \, F(e_i)=f_i \, , \, \, \, i \geq 1,
$$  
where $\{ e_i\}$ is the standard basis of $\ell^2$. This operator obviously has range $\ran(F)=\cK$. The synthesis operator establishes  a correspondence between frames in subspaces of $\cH$ and closed range operators. In particular, the synthesis operator of a Parseval frame is a partial isometry. Denote by $\| \, \cdot \, \|_2$ the usual Hilbert-Schmidt norm. Then a Parseval frame $\{ y_i \}$  with synthesis operator $Y$
is  a symmetric approximation of a frame $\{ f_i \}$   in a subspace $\cK \subseteq \cH$ with synthesis operator $F$ if the estimate
$$
\| F- Y \|_2 \leq \| F - X\|_2
$$
is valid for all partial isometries $X$ such that $\ker(X)=\ker(F)$. 
This latter condition on the kernels is clearly equivalent to the weak similarity of the frames. 
Now we can describe the main result on symmetric approximation in \cite{FPT02}. Let $F=U|F|$ be the  polar decomposition (i.e. $U$ is the unique partial isometry satisfying $F=U|F|$ and $\ker(U)=\ker(F)$). Then the Parseval frame $\{u_i\}=\{Ue_i\}$ is the unique symmetric approximation of $\{ f_i\}$, whenever the condition $\sum_{i=1}^\infty \| f_i - u_i \|^2< \infty$ is verified. 
  This frame $\{ u_i\}$ is called the \textit{canonical Parseval frame} associated to $\{ f_i\}$, and satisfies that  $\overline{\text{span}}\{ u_i \}=\cK$. 

In addition, we point out that best approximation of operators by partial isometries with several operators norms was studied in  \cite{M89, W86}; meanwhile  for previous results on  best approximation of frames with specific structure by Parseval frames  we refer to  Janssen and Strohmer \cite{JS02},   Feichtinger,  Kozek and  Luef \cite{KLF09}, and Han \cite{H1, H2}. 


\bigskip

\noi \textbf{The results of this paper.}  In this paper we address the following  questions in the setting of infinite dimensional Hilbert spaces:  Do we have a  closest Parseval frame to a given frame when the assumption of weak similarity is dropped? When do we have a unique closest Parseval frame? To emphasize that the assumption  of weak similarity is removed, we shall often use the term \textit{global symmetric approximation}. Simple examples can be constructed to show that the canonical Parseval frame may not be a global symmetric approximation  (see  \cite{FPT02, AC16}).   

 For our purpose  we first get in Section \ref{the canonical Parseval} a better understanding of  the set of all Parseval frames quadratically close to a given frame (see e.g. \cite{Ch95}). Recall that two frames $\{ f_i\}$ and $\{ g_i\}$ in subspaces $\cK$ and $\cL$, respectively,  are said to be \textit{quadratically close} if 
$$
\sum_{i=1}^\infty \| f_i - g_ i\|^2< \infty.
$$
Now fix a frame $\cF=\{ f_i\}$ in a subspace $\cK\subseteq \cH$ with synthesis operator $F$, and let $\{ u_i\}$ be its canonical Parseval frame. Denote by $\cQ_\cF$ the set of all Parseval frames quadratically close to $\{ f_i\}$. 
We show that the condition 
$\sum_{i=1}^\infty \| f_i - x_ i\|^2< \infty
$ for some Parseval frame $\{ x_i \}$, implies that the synthesis operator $F$ admits a singular value decomposition.  
Based on this fact, we prove our first main result: the set $\cQ_\cF$ is nonempty  if and only if $\{ u_i\}$ is quadratically close to $\{ f_i\}$  (Theorem \ref{cuadratically close}). 
A direct consequence is  that the notion of global symmetric approximation of frame $\{ f_i\}$ only makes sense when 
$
\sum_{i=1}^\infty \| f_i - u_ i\|^2< \infty.
$
Note that under this assumption,  $\cQ_\cF$ can also be described as the set of all Parseval frames quadratically close to $\{ u_i\}$. 

The strategy in \cite{AC16} for carrying out global symmetric approximation  in finite dimensional spaces heavily relies on the geometry of the partial isometries. This set has a finite number of connected components parametrized by the rank, and it is possible to find symmetric approximations restricted to each component. Due to the finite dimension of the underlying space, a global symmetric approximation exists, and must be one of the symmetric approximations previously found from the connected components. Returning to the infinite dimensional setting, obvious difficulties arise: the rank does not determine the connected components of the partial isometries, and a priori the existence of global symmetric approximations is not guaranteed. 

However, the following simple observation allows us to follow similar steps: given $\{ x_i \} \in \cQ_\cF$ a Parseval frame in $\cL \subseteq \cH$ with synthesis operator $X$, and the canonical Parseval frame $\{ u_i \}$ associated to $\cF$ with synthesis operator $U$, then the operator
$$
XU^*|_{\cK}:\cK \to \cL
$$ 
is Fredholm. Equivalently, the pair of projections $(U^*U,X^*X)$ is a Fredholm pair  \cite{AS94, ASS94}. The index of the operator $XU^*|_{\cK}:\cK \to \cL$ coincides with the index of the mentioned pair of projections, and it determines the connected components of the set $\cQ_\cF$. It is not difficult to see that there are always infinitely many connected components. Furthermore, each connected component can be described as an orbit of an action by a  Banach-Lie group. These geometric results are presented in Section \ref{components qf}.

The question of whether global symmetric approximation exists requires an interesting intermediate step. We study  symmetric approximation by Parseval frames restricted to the connected components of $\cQ_\cF$. This can be thought as  \textit{local  symmetric approximation}. 
Now we briefly  describe the results in this regard; see Section \ref{symm connected comp} for the precise statements and details.  The canonical Parseval frame is the unique symmetric approximation  belonging to the zero index component (Theorem \ref{canonico}). Since this connected component strictly contains the set of all Parseval frames weakly similar to $\cF$, our result extends the previously mentioned result on symmetric approximation from \cite{FPT02}. Symmetric approximations from negative index components also exist, and there are always infinitely many (Theorem \ref{S aprox}).  In contrast,  symmetric approximations restricted to a  component of index $k>0$ exist if and only if the  synthesis  
operator of the frame $\cF$ has at least $k$  lowest singular values. In this case, when symmetric approximations exist, it turns out that there are only two possibilities: there is a unique approximation or infinitely many of them (Theorem \ref{k>0 aprox}).  
In the aforementioned results, all the symmetric approximations restricted to any  connected component are completely described. 

In Section \ref{section global}, we use our previous results to eventually prove that  global symmetric approximations always exist whenever $\cQ_\cF \neq \emptyset$. 
Indeed, we  give an explicit formula for global symmetric approximations (Theorem \ref{global thm}).
The  singular value decomposition of the synthesis operator of  the frame $\cF$, allows us to determine the connected component where a global symmetric approximation belongs. We shall see that this depends on the number of nonzero singular values placed below $1/2$. We also give a necessary and sufficient criterion for uniqueness of global symmetric approximation in terms of the singular values.

\section{The canonical Parseval frame}\label{the canonical Parseval}

In this section we prove the following:

\medskip

\begin{teo}\label{cuadratically close}
Let $\{ f_i \}$ be a frame in a subspace $\cK$ of a Hilbert space $\cH$. The following are equivalent:
\begin{enumerate}
\item[i)] There is a Parseval frame $\{ x_i \}$ in a subspace $\cL \subseteq \cH$ such that
$$ \sum_{i=1}^\infty \| f_i - x_i\|^2 < \infty.$$
\item[ii)] The canonical Parseval frame $\{  u_i \}$ associated to $\{  f_i \}$ satisfies
$$ \sum_{i=1}^\infty \| f_i - u_i\|^2 < \infty.$$
\end{enumerate}
\end{teo}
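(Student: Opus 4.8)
The implication (ii) $\Rightarrow$ (i) is immediate: the canonical Parseval frame $\{u_i\}$ is itself a Parseval frame, so it can serve as the $\{x_i\}$ required in (i). The whole content is therefore (i) $\Rightarrow$ (ii), and my plan is to argue entirely at the level of synthesis operators. Let $F$ be the synthesis operator of $\{f_i\}$ and $X$ that of the given Parseval frame $\{x_i\}$, so $X$ is a partial isometry and the hypothesis reads $\|F-X\|_2<\infty$. Write $F=U|F|$ for the polar decomposition and let $P$ be the orthogonal projection onto $(\ker F)^\perp=\overline{\ran(|F|)}$. Since $U$ is isometric on $\ran(P)$ and $U=UP$, one has $F-U=U(|F|-P)$, and because $|F|-P$ maps into $\ran(P)$ this gives $\|F-U\|_2=\||F|-P\|_2$. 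Thus the theorem reduces to proving $\||F|-P\|_2<\infty$.

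First I would pass to moduli squared, where computations are cleanest. Set $Q=X^*X$, a projection. Since the Hilbert--Schmidt operators form an ideal, $|F|^2-Q=F^*F-X^*X=F^*(F-X)+(F-X)^*X$ is Hilbert--Schmidt. Now I want to replace the projection $Q$ by $P$. Because $|F|^2$ and $P$ are both supported on $(\ker F)^\perp$, I can split $|F|^2-P=P(|F|^2-Q)P-P(I-Q)P$. The first summand is controlled by $\||F|^2-Q\|_2$, so everything hinges on the \emph{misalignment term} $P(I-Q)P$. I expect this to be the main obstacle: $Q$ is a projection coming from a possibly unrelated Parseval frame, so it need not be compatible with the spectral decomposition of $|F|$, and the Hilbert--Schmidt-ness of $P(I-Q)P$ is not apparent a priori.

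The resolution is to bring in the lower frame bound. Since $P(I-Q)P=\bigl((I-Q)P\bigr)^*(I-Q)P$, it suffices to show $(I-Q)P$ is Hilbert--Schmidt. Multiplying $|F|^2-Q$ on the left by $I-Q$ and using $(I-Q)Q=0$ gives that $(I-Q)|F|^2$ is Hilbert--Schmidt. On the other hand, the lower frame bound $A$ forces $|F|^2\geq A$ on $(\ker F)^\perp$, so $|F|^2$ has closed range equal to $\ran(P)$ and a bounded Moore--Penrose pseudoinverse $(|F|^2)^\dagger$ with $|F|^2(|F|^2)^\dagger=P$. Consequently $(I-Q)P=\bigl[(I-Q)|F|^2\bigr](|F|^2)^\dagger$ is Hilbert--Schmidt, hence so is $P(I-Q)P$, and therefore $|F|^2-P$ is Hilbert--Schmidt.

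Finally I would descend from $|F|^2$ to $|F|$. On $(\ker F)^\perp$ one has $|F|-I=(|F|^2-I)(|F|+I)^{-1}$ with $|F|+I\geq\sqrt A+1$ boundedly invertible; globally this reads $|F|-P=(|F|^2-P)R$ for a bounded $R$. Hence $\||F|-P\|_2\leq\|R\|\,\||F|^2-P\|_2<\infty$, which yields $\|F-U\|_2=\||F|-P\|_2<\infty$ and proves (ii). As a byproduct, the Hilbert--Schmidt-ness of $|F|^2-P$ together with Weyl's theorem shows $\sigma_{\mathrm{ess}}(|F|^2)\subseteq\{0,1\}$, so with the lower bound $|F|$ is diagonalizable and $F$ admits a singular value decomposition with singular values accumulating only at $1$ --- the structural fact that the remainder of the paper rests upon.
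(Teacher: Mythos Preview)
Your argument is correct and substantially more elementary than the paper's. The paper proceeds by first establishing the singular value decomposition of $F$ (Lemma~\ref{spectral}), then invoking the index of the Fredholm pair $(P,Q)$, Carey's theorem on unitary conjugation of projections, the Lidskii--Mirsky--Wielandt majorization inequality, and a limiting argument with Ky--Fan norms to obtain the full dominance property $\sum_{i=1}^m s_i(F-U)\leq\sum_{i=1}^m s_i(F-X)$ for all $m$. You bypass all of this: the ideal property gives $|F|^2-Q\in\cB_2$, the lower frame bound makes $(|F|^2)^\dagger$ bounded so that $(I-Q)P$ and hence $P(I-Q)P$ are Hilbert--Schmidt, and then the factorization $|F|-P=(|F|^2-P)R$ with $R$ bounded finishes the job. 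What the paper's route buys is the dominance inequality itself, which upgrades the conclusion to arbitrary symmetrically-normed ideals (cf.\ the closing remark of Section~\ref{section global}) and sets up the machinery---index, Carey's theorem, Lidskii--Mirsky--Wielandt---that is reused verbatim in the proofs of Theorems~\ref{canonico}, \ref{S aprox} and \ref{k>0 aprox}. Your approach, on the other hand, gives the cleanest self-contained proof of Theorem~\ref{cuadratically close} as stated, and as you note it still recovers Lemma~\ref{spectral} as a corollary via Weyl's theorem.
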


Before proving this, we need to give some preliminary notions and results. The proof will rely on the Lidskii-Mirsky-Wielandt theorem, an approximation argument using the index of a pair of projections and the representation of the synthesis operator associated to $\{ f_i \}$ given in Lemma \ref{spectral}. 

\medskip

\noi \textit{Majorization and Lidskii-Mirsky-Wielandt theorem.}
Let $x=(x_1, \ldots, x_n)$ be a vector in $\R^n$. We denote by $x^{\downarrow}$ the vector obtained by rearranging
the coordinates of $x$ in  nonincreasing order. This means that  
$x^{\downarrow}=(x_1^{\downarrow}, \ldots, x_n^{\downarrow})$ satisfies $x^{\downarrow}_1\geq \ldots \geq x^{\downarrow}_n$.
Given two vectors $x,y \in \R^n$, we write $x \smayo_w  y$ if 
\begin{equation}\label{def may}
\sum_{i=1}^k x^{\downarrow}_i \leq \sum_{i=1}^k y^{\downarrow}_i , \, \, \, \, \, \, \, \,  k=1, \ldots,n. 
\end{equation}
In this case, we say that $x$ is \textit{submajorized} by $y$ and we write $x\smayo_w y$. If, in addition, there is an equality for $k=n$ in (\ref{def may}), then we write  $x \smayo y$ and  we say that $x$ is \textit{majorized} by $y$. 

Given an $n \times n$ matrix $X$,  we denote by $s(X)=(s_1(X), \ldots, s_n(X))$ the vector of its singular values  arranged in non increasing order.   The Lidskii-Mirsky-Wielandt theorem states (see e.g. \cite{B97, LM99}):

\begin{teo}\label{mirsky}
Let $F,G$ be $n\times n$ matrices,  then
$$
(|s_1(F)-s_1(G)|, \ldots , |s_n(F)-s_n(G)| ) \,   \textstyle{\smayo_w}  \, s(F-G).
$$
\end{teo}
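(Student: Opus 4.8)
The plan is to reduce the statement to the self-adjoint (eigenvalue) form of the Lidskii--Wielandt theorem by means of the Jordan--Wielandt dilation, which converts singular values into eigenvalues. For an $n\times n$ matrix $M$, consider the $2n\times 2n$ self-adjoint matrix
$$
\widetilde M=\begin{pmatrix} 0 & M \\ M^* & 0\end{pmatrix}.
$$
Starting from a singular value decomposition of $M$, a direct computation shows that $\widetilde M$ has eigenvalues $\pm s_i(M)$; listed in nonincreasing order they are
$$
s_1(M)\geq\cdots\geq s_n(M)\geq -s_n(M)\geq\cdots\geq -s_1(M).
$$
The decisive structural point is that $M\mapsto\widetilde M$ is linear, so that $\widetilde{F-G}=\widetilde F-\widetilde G$. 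This is what allows the difference $F-G$, whose singular values appear on the right-hand side, to be treated simultaneously with $F$ and $G$.

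The analytic input I would invoke is the eigenvalue version of the theorem: if $A,B$ are $m\times m$ self-adjoint matrices with eigenvalues arranged in nonincreasing order, then
$$
\bigl(\lambda_1(A)-\lambda_1(B),\ldots,\lambda_m(A)-\lambda_m(B)\bigr)\ \smayo\ \bigl(\lambda_1(A-B),\ldots,\lambda_m(A-B)\bigr).
$$
I regard this as the classical hard core of the result (see \cite{B97,LM99}). It is proved via Wielandt's minimax principle for the partial sums $\sum_{j=1}^k\lambda_{i_j}(A)$ over arbitrary index sets $i_1<\cdots<i_k$, applied with a single nested family of subspaces optimal for $A$ and admissible for $B$ and $A-B$, which reduces the estimate to the top-$k$ sum $\sum_{j=1}^k\lambda_j(A-B)$; choosing the index set that captures the $k$ largest differences gives the submajorization, and the equality needed at $k=m$ is merely the trace identity $\tr(A)-\tr(B)=\tr(A-B)$. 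The combinatorial bookkeeping of arbitrary index sets in this minimax step is where the genuine difficulty lies; it is exactly this feature that a naive triangle inequality for Ky Fan norms fails to supply.

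With these two ingredients the conclusion is a matter of careful bookkeeping. Apply the eigenvalue version to $A=\widetilde F$ and $B=\widetilde G$, so that $m=2n$ and $A-B=\widetilde{F-G}$. By the description of the spectrum above, the entrywise differences $\lambda_j(\widetilde F)-\lambda_j(\widetilde G)$ form, as a multiset, $\{\pm(s_i(F)-s_i(G))\}_{i=1}^n=\{\pm b_i\}_{i=1}^n$, where $b_i:=|s_i(F)-s_i(G)|$, while the eigenvalues of $\widetilde{F-G}$ form the multiset $\{\pm s_i(F-G)\}_{i=1}^n$. Both of these $2n$-vectors are symmetric about the origin, and in each the nonnegative entries dominate the nonpositive ones; hence for every $k\leq n$ the $k$ largest entries are drawn from the nonnegative part alone, and their sum equals $\sum_{i=1}^k b_i^\downarrow$ and $\sum_{i=1}^k s_i(F-G)$, respectively. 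Restricting the majorization provided by the eigenvalue version to the indices $k=1,\ldots,n$ therefore gives exactly
$$
\sum_{i=1}^k b_i^\downarrow\ \leq\ \sum_{i=1}^k s_i(F-G),\qquad k=1,\ldots,n,
$$
which, by the definition of $\smayo_w$ in (\ref{def may}), is the asserted weak submajorization $\bigl(|s_1(F)-s_1(G)|,\ldots,|s_n(F)-s_n(G)|\bigr)\smayo_w s(F-G)$. The only delicate point is to keep the sign and ordering conventions in the dilation consistent, so that the pairing $\lambda_j(\widetilde F)\leftrightarrow\lambda_j(\widetilde G)$ really reproduces the signed differences $s_i(F)-s_i(G)$ together with their negatives.
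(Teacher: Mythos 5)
Your proof is correct, but there is no internal argument to compare it against: the paper states Theorem \ref{mirsky} as a known classical result, citing \cite{B97,LM99}, and gives no proof of its own. Your route --- the Jordan--Wielandt dilation $\widetilde M=\begin{pmatrix} 0 & M \\ M^* & 0\end{pmatrix}$ converting singular values into the symmetric spectrum $\pm s_i(M)$, the linearity $\widetilde{F-G}=\widetilde F-\widetilde G$, the eigenvalue (Lidskii--Wielandt) majorization applied to $\widetilde F,\widetilde G$, and the restriction of the resulting partial-sum inequalities to $k=1,\ldots,n$ --- is precisely the standard derivation contained in those references, so in effect you have supplied the proof that the paper delegates to the literature. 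The bookkeeping is handled correctly: the ordered differences $\lambda_j(\widetilde F)-\lambda_j(\widetilde G)$ do form the multiset $\{\pm(s_i(F)-s_i(G))\}_{i=1}^n$, both $2n$-vectors are symmetric about $0$, and for $k\leq n$ the top-$k$ sums are realized on the nonnegative halves, yielding exactly the weak submajorization in \eqref{def may}. The one ingredient you do not prove in full --- the eigenvalue form of Lidskii's theorem, for which you give only a minimax sketch --- is the acknowledged hard core of the result and is covered by the same citations the paper itself invokes, so treating it as a black box is a legitimate modularization rather than a gap.
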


\medskip

\noi \textit{Index of a pair of projections.} 
We  recall the notion of index of a pair of projections (see \cite{AS94, ASS94}). Let $P,Q$ be two orthogonal projections on a Hilbert space $\cH$. The pair $(P,Q)$ is called a \textit{Fredhom pair} if the operator $QP|_{\ran(P)}:\ran(P) \to \ran(Q)$ is Fredholm. In this case, the \textit{index of the pair (P,Q)} is defined by 
\begin{align*}
j(P,Q)& := \text{index}(QP|_{\ran(P)}:\ran(P) \to \ran(Q)) 
\\ & =\dim(\ker(Q) \cap \ran(P)) - \dim(\ran(Q)\cap \ker(P)).
\end{align*}

\begin{rem}\label{comp and sum} We now state two elementary properties of Fredholm pairs which will be used throughout this paper. The first one follows easily, and the second was proved in \cite[Thm. 3.4]{ASS94}.
\begin{enumerate}
\item[i)] If the operator $P-Q$ is compact, then $(P,Q)$ is a Fredholm pair.
\item[ii)] If $(P,Q)$ and $(Q,R)$ are Fredholm pairs, and either $Q-R$ or $P-Q$ are compact, then $(P,R)$ is a Fredholm pair and $j(P,R)=j(P,Q)+j(Q,R)$.
\end{enumerate}
\end{rem}


Let $\cU(\cH)$ be the group of unitary operators on $\cH$. We consider the Fredholm unitary group, i.e.
$$
\cU_c(\cH)=\{  \,  L \in \cU(\cH) \, : \, L- I \text{ is a compact operator} \, \}.
$$ 
It is a  Banach-Lie group in the  topology defined by  the usual operator norm.
 In this section, we will use the following relation between pairs of projections and the Fredholm unitary group proved by Carey \cite[Remark 3.7]{C85}.

\begin{teo}\label{carey}
Let $P,Q$ be orthogonal projections on a Hilbert space $\cH$. Then $P-Q$ is compact and $j(P,Q)=0$ if and only if 
there is an unitary $L \in \cU_c(\cH)$ such that  $Q=LPL^*$. 
\end{teo}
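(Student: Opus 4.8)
The plan is to prove the two implications separately, handling the reverse implication by a direct Fredholm-index computation and the forward implication by reducing to the Halmos two-projections decomposition and constructing $L$ block by block. For the implication ($\Leftarrow$), assume $Q=LPL^*$ with $L=I+K$, $K$ compact. First I would expand $Q-P=KP+PK^*+KPK^*$ to see at once that $P-Q$ is compact, so that $(P,Q)$ is a Fredholm pair by Remark~\ref{comp and sum}(i) and $j(P,Q)$ is defined. To compute the index I would introduce the operator $A:=PL^*|_{\ran(P)}\colon \ran(P)\to\ran(P)$. Since for $v\in\ran(P)$ one has $Av=P(I+K^*)v=v+PK^*v$, the operator $A=I_{\ran(P)}+PK^*|_{\ran(P)}$ is a compact perturbation of the identity, hence Fredholm of index $0$. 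The point is then to identify its kernel and cokernel with the two subspaces in the definition of $j(P,Q)$: using that $L$ is injective one checks that for $v\in\ran(P)$ one has $Qv=0$ if and only if $PL^*v=0$, whence $\ker(A)=\ker(Q)\cap\ran(P)$; and since $A^*=PL|_{\ran(P)}$, the unitary $L$ carries $\ker(A^*)$ isometrically onto $\ran(Q)\cap\ker(P)$. Therefore $j(P,Q)=\dim\ker(A)-\dim\ker(A^*)=\text{index}(A)=0$.

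For the implication ($\Rightarrow$), assume $P-Q$ is compact and $j(P,Q)=0$. Here I would invoke the Halmos two-projections theorem to decompose $\cH$ into subspaces reducing both $P$ and $Q$: the four \emph{special} parts $M_{11}=\ran(P)\cap\ran(Q)$, $M_{10}=\ran(P)\cap\ker(Q)$, $M_{01}=\ker(P)\cap\ran(Q)$, $M_{00}=\ker(P)\cap\ker(Q)$, and a generic part $M_g$ on which, under a unitary identification $M_g\cong\cH_0\oplus\cH_0$, one has $P=\bigl(\begin{smallmatrix} I&0\\0&0\end{smallmatrix}\bigr)$ and $Q=\bigl(\begin{smallmatrix} C^2&CS\\SC&S^2\end{smallmatrix}\bigr)$ with $C,S\geq0$ injective, $CS=SC$ and $C^2+S^2=I$. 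The hypotheses enter this picture as follows. On $M_{10}$ and $M_{01}$ the operator $P-Q$ restricts to $\pm I$, so compactness of $P-Q$ forces $M_{10}$ and $M_{01}$ to be finite dimensional; moreover $\ker(Q)\cap\ran(P)=M_{10}$ and $\ran(Q)\cap\ker(P)=M_{01}$, so $j(P,Q)=0$ gives exactly $\dim M_{10}=\dim M_{01}$. On $M_g$ the $(1,1)$ block of $P-Q$ is $S^2$, so compactness of $P-Q$ forces $S$ (hence also $CS$) to be compact.

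Then I would build $L$ as an orthogonal direct sum over these pieces: $L=I$ on $M_{11}$ and on $M_{00}$; on the finite dimensional space $M_{10}\oplus M_{01}$, where $P$ and $Q$ are the projections onto $M_{10}$ and $M_{01}$ respectively, a block rotation carrying $M_{10}$ onto $M_{01}$ (possible precisely because $\dim M_{10}=\dim M_{01}$), which then satisfies $LPL^*=Q$ there and has $L-I$ of finite rank; and on $M_g$ the rotation $L=\bigl(\begin{smallmatrix} C&-S\\S&C\end{smallmatrix}\bigr)$, which is unitary because $C^2+S^2=I$ and $CS=SC$, and satisfies $LPL^*=\bigl(\begin{smallmatrix} C^2&CS\\SC&S^2\end{smallmatrix}\bigr)=Q$. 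To see $L\in\cU_c(\cH)$ I would verify that $L-I$ is compact on each block: it vanishes on $M_{11}$ and $M_{00}$, is of finite rank on $M_{10}\oplus M_{01}$, and on $M_g$ its entries are $\mp S$ (compact) and $C-I=-S^2(I+C)^{-1}$, which is compact since $(I-C)(I+C)=S^2$ and $I+C$ is invertible. Hence $L-I$ is compact and $Q=LPL^*$.

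The main obstacle is the forward direction: the real content is that the purely homotopy-theoretic datum $j(P,Q)=0$ must be converted into the equality of dimensions $\dim M_{10}=\dim M_{01}$ that permits a unitary between the mismatched finite dimensional special subspaces, while simultaneously keeping the global $L$ inside $\cU_c(\cH)$; the compactness of the angle operator $S$ on the generic part is exactly what guarantees that the generic block of $L-I$ remains compact. The reverse direction is comparatively soft once the Fredholm operator $A=PL^*|_{\ran(P)}$ is introduced.
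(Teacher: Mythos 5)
Your proof is correct, but note that the paper never proves this statement: it is quoted as a theorem of Carey, with a pointer to \cite{C85} (Remark 3.7) and, for the Hilbert--Schmidt variant mentioned in Remark \ref{holds for p2}, to \cite{SV78}. So your argument is a genuinely self-contained substitute rather than a variant of an internal proof. Both directions check out: in ($\Leftarrow$), the operator $A=PL^*|_{\ran(P)}$ is a compact perturbation of the identity of $\ran(P)$, with $\ker(A)=\ker(Q)\cap\ran(P)$ and $L(\ker(A^*))=\ran(Q)\cap\ker(P)$ (using $\ran(Q)=L(\ran(P))$), so $j(P,Q)=\mathrm{index}(A)=0$; in ($\Rightarrow$), the Halmos decomposition behaves exactly as you claim, the finite rotation on $M_{10}\oplus M_{01}$ exists precisely because $j(P,Q)=\dim M_{10}-\dim M_{01}=0$, and the generic-part rotation $\bigl(\begin{smallmatrix} C&-S\\ S&C\end{smallmatrix}\bigr)$ conjugates $P$ to $Q$ with compact defect since $S$ and $C-I=-S^2(I+C)^{-1}$ are compact. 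Two remarks on the comparison. First, a cleaner route to the compactness of $S$ is the identity $(P-Q)^2|_{M_g}=S^2\oplus S^2$, i.e. $|P-Q|\,\big|_{M_g}=S\oplus S$, which shows $P-Q$ and $S$ always lie in the \emph{same} operator ideal. Second, this matters for the generalization the paper actually needs in Remark \ref{holds for p2}: your step ``$S^2$ compact $\Rightarrow S$ compact'' is special to the ideal of compact operators (e.g. $S^2$ trace class only gives $S$ Hilbert--Schmidt), so for a symmetrically-normed ideal $\mathcal{I}$ one should instead deduce $S\in\mathcal{I}$ from $|P-Q|\,\big|_{M_g}=S\oplus S$; with that single change your construction yields $L-I\in\mathcal{I}$ verbatim, recovering the full strength of the cited result. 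In short, Carey's cited theorem buys ideal-independence in one stroke, while your Halmos-based argument buys a transparent, elementary construction of the unitary $L$.
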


\begin{rem}\label{holds for p2} It is worth pointing out that this result was proved in a more general setting, the compact operators can indeed be replaced by any symmetrically-normed ideal in the sense of \cite{GK60}. Throughout this paper, we shall also use this result for the ideal of Hilbert-Schmidt operators. We  refer to \cite{SV78} for an earlier proof of this case.
\end{rem}

\medskip

 Denote by $F$ and $X$  the synthesis operators of the frames $\{ f_i\}$ and $\{ x_i\}$ in the statement of Theorem \ref{cuadratically close}.
The fact that these frames are quadratically close, or equivalently, $F-X$ is a Hilbert-Schmidt operator, determines that $F$ admits the following singular value decomposition. Below we write $f\otimes g$ for the rank one operator defined by $(f\otimes g)h=\PI{h}{g}f$, where $f,g,h \in \cH$. 

\begin{lem}\label{spectral}
Let $F$ be an operator with closed range, $F=U|F|$ its polar decomposition   and $X$ a partial isometry. If  the operator $F-X$ is  compact, then  there exists an orthonormal basis $\{ g_i \}_1^\alpha$ of $\ker(F)^\perp$  satisfying
\begin{equation}\label{desc}
F=\sum_{i=1}^\alpha s_i (Ug_i) \otimes g_i \, , \, \, \, \, 1 \leq \alpha \leq \infty,
\end{equation}
where $\{ s_i\}_1^\alpha$ are the nonzero singular values of $F$. In the case where $\alpha=\infty$,  the series is unconditionally convergent in the weak operator topology and $s_i \to 1$.
\end{lem}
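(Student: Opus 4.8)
The plan is to show that, after restricting to $\ker(F)^\perp$, the positive operator $|F|^2=F^*F$ is a compact perturbation of the identity; the decomposition \eqref{desc} then falls out of the spectral theorem for self-adjoint operators of the form $I+(\text{compact})$. First I would convert the hypothesis on $F-X$ into information about $|F|^2$. Writing
$$
F^*F-X^*X=F^*(F-X)+(F-X)^*X
$$
and using that $F-X$ is compact while $F^*$ and $X$ are bounded, we get that $|F|^2-X^*X$ is compact. Since $X$ is a partial isometry, $P:=X^*X$ is an orthogonal projection, so $|F|^2$ is a compact perturbation of a projection.

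The key step comes next. By Weyl's theorem on the invariance of the essential spectrum under compact perturbations, $\sigma_{\mathrm{ess}}(|F|^2)=\sigma_{\mathrm{ess}}(P)\subseteq\{0,1\}$. On the other hand, because $F$ has closed range so does $|F|^2$, whence $0$ is isolated in (or absent from) $\spec{|F|^2}$; concretely there is $c>0$ with $\spec{|F|^2}\subseteq\{0\}\cup[c,\|F\|^2]$. Restricting to the reducing subspace $\ker(F)^\perp$, the operator $A:=|F|^2|_{\ker(F)^\perp}$ is positive, bounded below by $c$, and satisfies $\sigma_{\mathrm{ess}}(A)\subseteq\sigma_{\mathrm{ess}}(|F|^2)\cap[c,\|F\|^2]\subseteq\{1\}$. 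Hence $A-I$ is self-adjoint with $\sigma_{\mathrm{ess}}(A-I)\subseteq\{0\}$, and a self-adjoint operator whose essential spectrum is contained in $\{0\}$ is compact. \emph{This is the heart of the matter}: it is exactly here that the two hypotheses must be combined, the compactness of $F-X$ to control the essential spectrum and the closed range of $F$ to separate it from $0$. I expect everything after this point to be routine.

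With $A=I+(\text{compact})$ in hand, the spectral theorem for compact self-adjoint operators applied to $A-I$ furnishes an orthonormal basis $\{g_i\}_1^\alpha$ of $\ker(F)^\perp$ of eigenvectors, say $Ag_i=s_i^2 g_i$ with $s_i\geq\sqrt c>0$; when $\alpha=\infty$ the eigenvalues of the compact operator $A-I$ tend to $0$, forcing $s_i\to 1$. Since $|F|$ vanishes on $\ker(F)$ and equals $\sqrt A$ on $\ker(F)^\perp$, I obtain $|F|=\sum_i s_i\, g_i\otimes g_i$ and therefore $F=U|F|=\sum_i s_i\,(Ug_i)\otimes g_i$, which is \eqref{desc}, the $s_i$ being precisely the nonzero singular values of $F$.

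Finally, for $\alpha=\infty$ I would verify convergence by a direct estimate: for fixed $h,k\in\cH$,
$$
\sum_i\bigl|s_i\PI{h}{g_i}\PI{Ug_i}{k}\bigr|\leq\|F\|\Bigl(\sum_i|\PI{h}{g_i}|^2\Bigr)^{1/2}\Bigl(\sum_i|\PI{g_i}{U^*k}|^2\Bigr)^{1/2}\leq\|F\|\,\|h\|\,\|k\|,
$$
using Bessel's inequality together with $\|U^*\|\leq 1$; this yields unconditional convergence of the series in the weak operator topology. The diagonalization and this last estimate should present no difficulty, so the only genuinely delicate point remains the compactness of $A-I$ isolated above.
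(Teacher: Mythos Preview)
Your proof is correct and follows essentially the same route as the paper's: both show $|F|^2-X^*X$ is compact, invoke Weyl's theorem to get $\sigma_{\mathrm{ess}}(|F|^2)\subseteq\{0,1\}$, use the closed-range hypothesis to separate the spectrum from $0$, and then diagonalize via the spectral theorem. The only cosmetic difference is that the paper phrases the diagonalization in terms of $|F|$ having finite-multiplicity eigenvalues off $\{0,1\}$, while you make the compactness of $A-I$ explicit and apply the compact spectral theorem---equivalent formulations of the same step.
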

\begin{proof}
 Since $F -X$ is a compact operator,  $F^*F - F^*X$ and $F^*X -X^*X$ are also compact operators. Thus, $|F|^2-Q$ is compact, where $Q=X^*X$. Therefore the essential spectrum of $|F|^2$ satisfies $\sigma_{ess}(|F|^2)=\sigma_{ess}(Q)\subseteq \{ 0,\,1\}$. This gives also $\sigma_{ess}(|F|)\subseteq \{ 0, \, 1\}$. The operator $F$ has closed range, then $F^*$ also has this property, and thus $\ran(|F|)=\ran(F^*)$ is closed. Consequently, there are two possibilities:  $0$ does not belong to the spectrum  of $|F|$ or $0$ is an isolated point of the spectrum. Hence the spectrum of $|F|$ must be equal to $\{ s_i \}_1 ^\alpha \cup \{ 0\}$ or $\{ s_i \}_1 ^\alpha$, where each $s_i$ is a nonzero singular value of $F$, $1 \leq \alpha \leq \infty$ and $s_i \to 1$ when $\alpha=\infty$. By the condition on the essential spectrum, each $s_i\neq 1$ is an eigenvalue with finite multiplicity of the operator $|F|$. 

Applying the spectral theorem for self-adjoint operators, we find that there is an orthonormal basis $\{g_i\}_1 ^\alpha$ of $\ker(|F|)^\perp$  such that
$$
|F|= \sum_{i=1}^{\alpha} s_i \,  g_i \otimes g_i \, ; \, \, \, 1 \leq \alpha  \leq \infty.
$$
When $\alpha=\infty$, this series converges in the weak operator topology. Using  the polar decomposition, we have 
$$
F=U|F|=\sum_{i=1}^{\alpha} s_i \,  (Ug_i) \otimes g_i \, .
$$
  The convergence only depends on the fact that $\{ g_i \}_1^\alpha$ and $\{ Ug_i\}_1^\alpha$ are orthonormal bases of $\ker(F)^\perp$ and $\ran(F)$, respectively, which implies that the series is unconditionally convergent when $\alpha=\infty$. 
\end{proof}

We are now in position to prove Theorem \ref{cuadratically close}.

\begin{proof}
The nontrivial assertion is $i) \Rightarrow ii)$. Let $F$, $X$ and $U$ be the synthesis operators of $\{ f_i\}$, $\{ x_i\}$ and $\{ u_i\}$, respectively. We begin by noting that $F -U$ is a compact operator. 
 Indeed, by Lemma \ref{spectral},
$$
F-U=\sum_{i=1}^{\alpha} (s_i-1) \,  (Ug_i) \otimes g_i.
$$
Since $s_i \to 1$ when $\alpha=\infty$, it follows  that $F-U$ is a compact operator. Then the  sequence of its singular values arranged in nonincreasing order $\{ s_i(F-U)\}$ is well defined. 

Next note that the frames $\{ f_i \}$ and $\{ u_i \}$ are quadratically close if and only if the operator $F-U$ is Hilbert-Schmidt. This latter condition will follow immediately from the fact that $F-X$ is Hilbert-Schmidt and the  dominance property (see e.g. \cite[p. 82]{GK60}), that is 
\begin{equation}\label{dominance property}
\sum_{i=1}^m s_i(F-U) \leq\sum_{i=1}^m s_i(F-X), \, \, \, \, \forall \, m \geq 1. 
\end{equation}
Our task now is to prove these inequalities for the singular values.

\medskip

Let $P$ be the orthogonal projection onto $\ker(F)^{\perp}$. From the proof of Lemma \ref{spectral}, we have that $|F|^2-Q=K$ is compact, and
\begin{align*}
P-Q & =   P - |F|^2 + K\\
& =  \sum_{i=1}^{\alpha}  g_i \otimes g_i   - \sum_{i=1}^{\alpha} s_i ^2 \,  g_i \otimes g_i  + K\\
& = \sum_{i=1}^{\alpha} (1-s_i ^2 ) \,  g_i \otimes g_i  + K.
\end{align*}
Using again that $s_i \to 1$ when $\alpha = \infty$, it follows that $P-Q$ is compact. In particular, this gives that $(P,Q)$ is a Fredholm pair. 

If $0<k=j(P,Q)$, then define the block diagonal operators 
$$
\tilde{F}: \ell^2 \oplus \C^k \to \cH \oplus \C^k, \, \, \tilde{F}=F \oplus 0, \hspace{0.8 cm}   \tilde{X}: \ell^2 \oplus \C^k \to \cH \oplus \C^k, \, \, \tilde{X}=X\oplus I\, .
$$ 
The corresponding projections $\tilde{P}=P\oplus 0$ and $\tilde{Q}=Q\oplus I$ now satisfy that $\tilde{P}-\tilde{Q}$ is compact and $j(\tilde{P},\tilde{Q})=0$.  Accordingly, the canonical partial isometry is given by $\tilde{U}=U\oplus 0$. In what follows, we will prove that 
$\tilde{F}- \tilde{U}$ is Hilbert-Schmidt by using that $\tilde{F}-\tilde{X}$ is Hilbert-Schmidt, but this is equivalent  to have that $F-U$ is Hilbert-Schmidt.  Following a similar argument, we can modify $F$ and $X$ to obtain a Fredholm pair with zero index in the case where $j(P,Q)<0$. In the remainder of the proof, for short we write  $F$, $X$ and $U$ to refer to the operators $\tilde{F}$, $\tilde{X}$ and $\tilde{U}$, and by the preceding remarks, we can assume that $j(P,Q)=0$.

By Theorem \ref{carey}, there exists a unitary operator $L \in \cU_c(\ell^2)$ such that $Q=LPL^*$. In the case where $\alpha <\infty$, $F$ has finite dimensional range, and thus $F-U$ is Hilbert-Schmidt. So we assume that $\alpha=\infty$.  For $n \geq 1$, set
$$
P_n:= \sum_{i=1}^n  g_i \otimes g_i  \, ; \, \, \, \, \, \, Q_n:=LP_nL^*.
$$
Note that $\rank(P_n)=n$, $P_n\leq P$, $Q_n\leq Q$ and $P_n \to P$, $Q_n \to Q$ strongly.
On one hand, note that 
$$
|(F-U)P_n|= \sum_{i=1}^n |s_i-1| \, g_i \otimes g_i \, ,
$$
which gives
\begin{equation*}\label{first sing v}
s((F-U)P_n)=(|s_1-1|, \ldots, |s_n -1|)^{\downarrow}.
\end{equation*}
On the other hand, 
\begin{equation*}\label{second sing v}
s(FP_n)=(s_1, \ldots , s_n)^{\downarrow},
\end{equation*}
and since $Q_n \leq Q$, 
\begin{equation*}\label{third sing v}
s(XQ_n)=(1, \ldots , 1).
\end{equation*}
It follows that
$$
s((F-U)P_n)=|s(FP_n) - s(XQ_n)|^{\downarrow}, \, \, \, \, n \geq 1.
$$
Now if $m\geq 1$, then by Theorem \ref{mirsky} we get 
\begin{align*}
\sum_{i=1}^m s_i((F-U)P_n) & = \sum_{i=1}^m |s(FP_n)-s(XQ_n)|_i ^{\downarrow} \\
& \leq \sum_{i=1}^m s_i(FP_n-XQ_n)
\end{align*}
In the terminology of \cite{GK60}, this can be rewritten using the Ky-Fan norms $\| \, \cdot \, \|_{(m)}$ given by the sum of the first $m$ largest singular values:
\begin{equation}\label{Ky Fan}
\| (F-U)P_n \|_{(m)} \leq \|FP_n - XQ_n \|_{(m)}\, .
\end{equation}
Note that $(F-U)P_n \to F-U$ and $FP_n -XQ_n \to F-X$ strongly. Using that $F-U$ is compact, it follows by \cite[Thm. 6.3]{GK60} that
$\|(F-U)P_n - (F-U)\|_{(m)}\to 0$. A similar argument can be applied to get the convergence in the Ky-Fan norm of the right-hand side of \eqref{Ky Fan}, since
\begin{align*}
FP_n - XQ_n  & = FP_n- XLP_nL^* =FP_n - X(L-I)P_nL^* - XP_nL^* \\
& =(F-X)P_n  - X(L-I)P_nL^* - XP_n(L^*-I),
\end{align*}
and $L-I$, $L^*-I$ and $F-X$ are compact operators. Letting $n \to \infty$ in the inequality \eqref{Ky Fan}, we obtain
$$
\| F-U\|_{(m)} \leq \| F- X \|_{(m)}
$$
for all $m \geq 1$. Thus, we have proved the dominance property in \eqref{dominance property}. This finishes the proof. 
\end{proof}

\section{Parseval frames quadratically close to a frame}\label{components qf}

Let $\cF=\{ f_i\}$ be a frame in a subspace $\cK \subseteq \cH$. Let $\cQ_{\cF}$ denote the set  of all Parseval frames in subspaces of $\cH$ which are quadratically close to $\{ f_i\}$, i.e. 
$$
\cQ_{\cF}=\left\{ \, \{ x_i\} \text{ is a Parseval frame in } \cL   \, : \, \cL  \subseteq \cH , \, \sum_{i=1}^\infty\| x_i - f_i \|^2 < \infty \,  \right\}.
$$
Of course, we may have $\cQ_\cF = \emptyset$. In order to study its geometric structure,  we shall implicitly assume that $\cQ_\cF$ is a nonempty set for the remainder of this section.  Recall that by  Theorem \ref{cuadratically close},  $\cQ_\cF\neq \emptyset$ if and only if the canonical Parseval frame  $\{ u_i \}$ is quadratically close to $\{ f_i \}$. Therefore we have the following alternative description
\begin{equation}\label{descrip 2}
\cQ_{\cF}=\left\{ \, \{ x_i\} \text{ is a Parseval frame in } \cL   \, :  \, \cL \subseteq \cH , \, \sum_{i=1}^\infty\| x_i - u_i \|^2 < \infty \, \right\}.
\end{equation}
It is  natural to endow $\cQ_\cF$ with the $\ell^2$-distance:
$$
d(\{ x_i\}, \{ y_i\})=\left(\sum_{i=1}^\infty \| x_i - y_i\|^2\right)^{1/2}, \,\, \, \, \{ x_i\}, \{ y_i\} \in \cQ_{\cF}.
$$
Clearly, $\cQ_\cF$ becomes a metric space with this distance. 
We now turn our attention to the problem of characterizing  the connected components of $\cQ_\cF$, which will later play a fundamental role in  symmetric approximation.

\begin{rem}\label{homeo affine}
Given a Parseval frame $\{ x_i \}$ in a subspace of $\cH$, whose synthesis operator is a partial isometry  $X:\ell^2 \to \cH$, note that $\{ x_i \} \in \cQ_\cF$ if and only if the operator $X-F$  is Hilbert-Schmidt. Using the description in \eqref{descrip 2}, this is equivalent to have that $X-U$ is a Hilbert-Schmidt operator, where $F=U|F|$ is the polar decomposition.  Denote by $\cI$ the set of partial isometries from $\ell^2$ to $\cH$ and $\cB_2(\ell^2,\cH)$ the space of Hilbert-Schmidt  operators between these Hilbert spaces. Then the map 
$$\cQ_\cF \to \cI \cap (U+ \cB_2(\ell^2,\cH) ), \, \, \, \, \, \{ x_i\}=\{ Xe_i \} \mapsto X,$$ 
is a  homeomorphism. Here $\{ e_i\}$ is the standard basis of $\ell^2$ and the  space $\cI \cap (U+ \cB_2(\ell^2,\cH))$ is endowed with the metric given by the  Hilbert-Schmidt norm $d_2(X,Y)= \| X-Y\|_2$.
 Throughout this section, to prove properties of the Parseval frames in $\cQ_\cF$ we will often change to  partial isometries in $\cI \cap (U+ \cB_2(\ell^2,\cH))$. 
\end{rem}

Let $\cU_2(\cH)$ be the Banach-Lie group consisting of unitaries which are Hilbert-Schmidt perturbations of the identity on a Hilbert space $\cH$, i.e.
\begin{equation*}
\cU_2(\cH):=\{  \, L \in \cU(\cH) \, : \, L -I \text{ is Hilbert-Schmidt}  \, \}.
\end{equation*}
The product group $\cU_2(\cH) \times \cU_2(\ell^2)$  acts on $\cQ_\cF$ as follows: 
\begin{equation}\label{action uxu}
 (V,W)\cdot \{ x_i\} = \{ VXW^*e_i \},   \, \, \, \, W \in \cU_2(\ell^2), \, V \in \cU_2(\cH),
\end{equation}
Indeed, note that if  $X-U$ is Hilbert-Schmidt, then  
$VXW^*-U= (V-I)XW^* + X(W^*-I) + X-U$ is Hilbert-Schmidt. Given $\{ x_i\} \in \cQ_\cF$, its orbit is given by
$$
\cO(\{ x_i\})=\{ \, \{ VXW^*e_i\} \, : \, V \in \cU_2(\cH), \, W \in \cU_2(\ell^2)  \, \}.
$$
Our next results rely on the following spatial characterization of these orbits (see \cite[Thm. 2.4]{Ch10}). We switch to partial isometries as explained in Remark \ref{homeo affine}.

\begin{lem}\label{stiefel man}
Consider $X,Y:\ell^2 \to \cH$  two partial isometries. Then  $X-Y$ is Hilbert-Schmidt and $j(X^*X,Y^*Y)=0$ if and only if 
 there exist $W \in \cU_2(\ell^2)$ and $V \in \cU_2(\cH)$ such that $X=VYW^*$.
\end{lem}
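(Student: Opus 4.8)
The plan is to prove the two implications separately, with essentially all the work in the converse. For the easy direction, suppose $X=VYW^*$ with $V\in\cU_2(\cH)$ and $W\in\cU_2(\ell^2)$. Writing
$$
X-Y=(V-I)YW^*+Y(W^*-I),
$$
and using that $V-I$ and $W^*-I$ are Hilbert-Schmidt while $Y,W$ are bounded, one gets that $X-Y$ is Hilbert-Schmidt. Moreover $X^*X=W(Y^*Y)W^*$, so the initial projections are unitarily conjugate through $W^*\in\cU_2(\ell^2)\subseteq\cU_c(\ell^2)$; hence $X^*X-Y^*Y$ is Hilbert-Schmidt (and in particular compact), and the "if" part of Theorem \ref{carey} (in its Hilbert-Schmidt version, Remark \ref{holds for p2}) forces $j(X^*X,Y^*Y)=0$.

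For the converse I would abbreviate $P=X^*X$, $Q=Y^*Y$, $E=XX^*$, $G=YY^*$ for the initial and final projections. The first step is to record that all relevant differences are Hilbert-Schmidt: from $X-Y$ Hilbert-Schmidt and the identities
$$
P-Q=X^*(X-Y)+(X-Y)^*Y,\qquad E-G=(X-Y)X^*+Y(X-Y)^*,
$$
both $P-Q$ and $E-G$ are Hilbert-Schmidt, so by Remark \ref{comp and sum} i) the pairs $(P,Q)$ and $(E,G)$ are Fredholm pairs.

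The hard part will be to show that the hypothesis $j(P,Q)=0$ on the initial projections propagates to the final projections, i.e. $j(E,G)=0$; this index relation is the main obstacle, and everything else is then routine. Here I would exploit that a partial isometry restricts to a unitary between its initial and final spaces: let $X_0\colon\ran P\to\ran E$ and $Y_0\colon\ran Q\to\ran G$ denote these unitaries. A direct computation shows that for $\xi\in\ran P$ one has $Y_0^{-1}(GE|_{\ran E})X_0\,\xi=Y^*X\xi$, using $Y_0^{-1}=Y^*|_{\ran G}$ and $Y^*G=Y^*$. Since $X-Y$ is compact, $Y^*X-Q=Y^*(X-Y)$ is compact, so as a Fredholm operator $\ran P\to\ran Q$ the map $Y^*X|_{\ran P}$ differs from $QP|_{\ran P}$ by a compact operator. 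As $X_0$ and $Y_0^{-1}$ are invertible they do not change the index, whence
$$
j(E,G)=\text{index}(GE|_{\ran E})=\text{index}(Y^*X|_{\ran P})=\text{index}(QP|_{\ran P})=j(P,Q)=0.
$$

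With $j(E,G)=0$ in hand I would construct $V$ and $W$ explicitly. First, since $P-Q$ is Hilbert-Schmidt and $j(P,Q)=0$, Theorem \ref{carey} (Hilbert-Schmidt version) gives $W\in\cU_2(\ell^2)$ with $P=WQW^*$; then $Y':=YW^*$ is a partial isometry with initial projection $P$ and final projection $G$, and $X-Y'$ is still Hilbert-Schmidt. The operator $V_0:=X(Y')^*$ is a partial isometry from $\ran G$ onto $\ran E$ with $V_0-G=(X-Y')(Y')^*$ Hilbert-Schmidt and $V_0Y'=X$. Finally, using $E-G$ Hilbert-Schmidt and $j(E,G)=0$ — equivalently $(I-E)-(I-G)$ Hilbert-Schmidt with $j(I-E,I-G)=0$ — Theorem \ref{carey} provides a partial isometry $V_1$ from $\ran(I-G)$ onto $\ran(I-E)$ with $V_1-(I-G)$ Hilbert-Schmidt. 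I would then verify that $V:=V_0+V_1$ is unitary (the cross terms vanish since the initial and final spaces involved are mutually orthogonal), that $V-I=(V_0-G)+(V_1-(I-G))$ is Hilbert-Schmidt so that $V\in\cU_2(\cH)$, and that $V_1Y'=0$, whence $VY'=V_0Y'=X$, that is $X=VYW^*$.
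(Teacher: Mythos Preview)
The paper does not actually prove this lemma; it is quoted from \cite[Thm.~2.4]{Ch10} without argument. Your proposal supplies a correct, self-contained proof. The easy direction is fine, and for the converse your key step---transporting the index identity $j(X^*X,Y^*Y)=0$ to the final projections via the conjugation $Y_0^{-1}(GE|_{\ran E})X_0=Y^*X|_{\ran P}$ and the compact perturbation $Y^*X-Q=Y^*(X-Y)$---is exactly what is needed and is carried out cleanly. The explicit construction $V=V_0+V_1$ with $V_0=X(Y')^*$ then goes through as you describe.

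One small expository point: Theorem~\ref{carey} as stated yields a \emph{unitary} $L\in\cU_2(\cH)$ with $I-E=L(I-G)L^*$, not directly a partial isometry; you should make explicit that $V_1:=L(I-G)$ is the partial isometry you want, and then $V_1-(I-G)=(L-I)(I-G)$ is indeed Hilbert-Schmidt. With that one line added, the argument is complete.
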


 We now give the main result of this section. 


\begin{pro}\label{compo con}
Let $\cF=\{ f_i\}$ be a frame in a subspace $\cK \subseteq \cH$. Let $F$ be its synthesis operator and $F=U|F|$  the polar decomposition.
Set $n_1:=\dim(\ker(F))$, $n_2:=\dim(\ker(F)^\perp)$ and $n_3:=\dim(\ran(F)^\perp)$. Then $\cQ_\cF$ can be expressed as the union of infinitely many connected components labeled as follows: 
$$
\cQ_\cF=\bigcup_{k \in \mathbb{J}}\cQ_\cF^k\, , \, \, \, \, \, \, \, \, \mathbb{J}=[- \min \{ n_1, \, n_3 \}, n_2]\cap \mathbb{Z}.
$$
These connected components have the following equivalent characterizations:
\begin{align*}
\cQ^k_{\cF} & =   \{ \,\{ x_i\} \in \cQ_\cF \, :  \, j(U^*U,X^*X)=k \, \} \\
& = \{ \,\{ x_i\} \in \cQ_\cF \, :  \,  j(UU^*,XX^*)=k   \,  \} \\
& = \{  \,\{ x_i\} \in \cQ_\cF \, :  \,  \text{index}(XU^*|_\cK : \cK \to \cL)=k, \,  \cL=\overline{span} \{ x_i \} \,   \}, 
\end{align*}
where $X$ denotes the synthesis operator associated to each Parseval frame $\{ x_i \}$. Furthermore, each connected component is an orbit of the action defined in \eqref{action uxu}.
\end{pro}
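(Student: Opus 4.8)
The plan is to work throughout with synthesis operators, passing through the homeomorphism of Remark \ref{homeo affine} so that a Parseval frame $\{x_i\}\in\cQ_\cF$ becomes a partial isometry $X\in\cI\cap(U+\cB_2(\ell^2,\cH))$. The proof then splits into four stages: show the three index descriptions coincide and are well defined; identify each level set of the index with a single orbit of the action \eqref{action uxu}; deduce that these level sets are the connected components; and finally compute the range $\mathbb{J}$. For the first stage, note that $X-U$ is Hilbert--Schmidt for every $\{x_i\}\in\cQ_\cF$, so $X^*X-U^*U$ and $XX^*-UU^*$ are compact; by Remark \ref{comp and sum}(i) both $(U^*U,X^*X)$ and $(UU^*,XX^*)$ are Fredholm pairs and all three indices are defined. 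Since $U^*$ maps $\cK$ unitarily onto $\ran(U^*U)$ and $X$ maps $\ran(X^*X)$ unitarily onto $\cL$, the operator $XU^*|_\cK$ factors, up to these two unitaries, through $X^*X\,U^*U|_{\ran(U^*U)}$, whose index is $j(U^*U,X^*X)$ by definition; hence $\text{index}(XU^*|_\cK)=j(U^*U,X^*X)$, which matches the first and third descriptions. For the second, the operator defining $j(UU^*,XX^*)$ is $XX^*|_\cK:\cK\to\cL$, and $XX^*|_\cK-XU^*|_\cK=X(X^*-U^*)|_\cK$ is compact, so invariance of the Fredholm index under compact perturbations gives equality with the third description as well. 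Thus the three conditions carve out the same subset $\cQ^k_\cF$.

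The core is the second stage. First I check that the action preserves the index: if $X'=VXW^*$ with $(V,W)\in\cU_2(\cH)\times\cU_2(\ell^2)$, then $X'^*X'=W(X^*X)W^*$, and because $W\in\cU_c(\ell^2)$ Theorem \ref{carey} yields $j(X^*X,W(X^*X)W^*)=0$; additivity of the index (Remark \ref{comp and sum}(ii), applicable since $X^*X-W(X^*X)W^*$ is compact) then gives $j(U^*U,X'^*X')=j(U^*U,X^*X)$. Conversely, let $X,Y$ be synthesis operators of two frames in $\cQ^k_\cF$. Then $X-Y$ is Hilbert--Schmidt, and routing through $U^*U$ using additivity together with the antisymmetry $j(X^*X,U^*U)=-j(U^*U,X^*X)$ (immediate from the definition) gives $j(X^*X,Y^*Y)=-k+k=0$. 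Lemma \ref{stiefel man} then produces $V\in\cU_2(\cH)$, $W\in\cU_2(\ell^2)$ with $X=VYW^*$, so the two frames lie in one orbit. Hence each $\cQ^k_\cF$ is precisely one orbit of \eqref{action uxu}.

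For the third stage, each orbit is the continuous image of the connected group $\cU_2(\cH)\times\cU_2(\ell^2)$ (each factor is a connected Banach--Lie group, as one sees by exponentiating a skew-adjoint Hilbert--Schmidt logarithm), hence connected. Moreover the index is locally constant on $\cQ_\cF$: nearby frames have synthesis operators close in $\cB_2$, hence in operator norm, and the index of a Fredholm pair is stable under small norm perturbations, so each $\cQ^k_\cF$ is open. A connected open set whose complement is a union of the other (open) level sets is a connected component, which establishes the identification $\cQ_\cF=\bigcup_k\cQ^k_\cF$ into connected components.

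The last stage, where the real work lies, determines $\mathbb{J}$. The bounds come from reading off the two formulas: the first description gives $k=\dim(\ker X\cap\ker(F)^\perp)-\dim(\ker X^\perp\cap\ker F)$, whence $-n_1\le k\le n_2$, while the second gives $k=\dim(\cL^\perp\cap\cK)-\dim(\cL\cap\ran(F)^\perp)$, whence $k\ge -n_3$; together $k\in[-\min\{n_1,n_3\},n_2]\cap\mathbb{Z}=\mathbb{J}$. Surjectivity onto $\mathbb{J}$ is shown by finite-rank modifications of $U$ built from the basis $\{g_i\}$ of Lemma \ref{spectral}: for $0<k\le n_2$ set $Xg_i=Ug_i$ for $i>k$ and $Xg_i=0$ otherwise, shrinking the initial space inside $\ker(F)^\perp$ to produce index $k$; for $k=-m$ with $0<m\le\min\{n_1,n_3\}$ keep $X=U$ on $\ker(F)^\perp$ and send $m$ orthonormal vectors of $\ker F$ isometrically onto $m$ orthonormal vectors of $\ran(F)^\perp$, producing index $-m$. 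In each case $X-U$ has finite rank, so $\{Xe_i\}\in\cQ_\cF$; and since $\ell^2$ is infinite dimensional at least one of $n_1,n_2$ is infinite, so a short case check shows $\mathbb{J}$ is always infinite. I expect the two genuinely delicate points to be the reduction to index $0$ in the second stage (which is what makes Lemma \ref{stiefel man} applicable) and the \emph{sharp} lower bound $-\min\{n_1,n_3\}$: this requires playing the first and second descriptions against each other, which is precisely why recording all three characterizations is worthwhile.
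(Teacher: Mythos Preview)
Your proof is correct and follows essentially the same strategy as the paper: identify the three index descriptions, show each level set is a single $\cU_2(\cH)\times\cU_2(\ell^2)$-orbit via Lemma \ref{stiefel man}, deduce these are the connected components, and compute $\mathbb{J}$ by explicit finite-rank modifications of $U$.

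Two points where your execution differs slightly from the paper, both in your favor. First, for the equality $j(U^*U,X^*X)=j(UU^*,XX^*)$ the paper argues indirectly via Lemma \ref{stiefel man} (showing the two partitions of $\cQ_\cF$ coincide), whereas your observation that $XX^*|_\cK$ and $XU^*|_\cK$ are Fredholm operators $\cK\to\cL$ differing by the compact $X(X^*-U^*)|_\cK$ gives the pointwise equality of indices at once. Second, to see each $\cQ^k_\cF$ is clopen the paper routes through the known component structure of the restricted Grassmannian $Gr(P)$, while you invoke norm-stability of the Fredholm index directly; both work, and yours avoids an extra layer of machinery. One small remark on the final sentence: the infinitude of $\mathbb{J}$ needs not just $n_1+n_2=\infty$ but also $n_2+n_3=\dim\cH=\infty$, so that if $n_2<\infty$ then both $n_1$ and $n_3$ are infinite and hence $\min\{n_1,n_3\}=\infty$; your ``short case check'' should make this explicit.
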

\begin{rem}
In the definition of the set $\mathbb{J}$ we interpret that the interval contains all the negative numbers when $n_1=\infty$ and $n_3=\infty$, and all the positive numbers when $n_2=\infty$. 
\end{rem}
\begin{proof}
We begin with the proof of the first characterization of 
the connected components. Take a Parseval frame $\{ x_i ^0\} \in \cQ_\cF$. Denote by $\cC$ the connected component of $\{ x_i^0\}$ in $\cQ_\cF$.
 Since the synthesis operator $X_0$ of  $\{ x_i ^0\}$ is such that $U-X_0$ is Hilbert-Schmidt, then $U^*U-X_0^*X_0$ is also Hilbert-Schmidt, and in particular, by Remark \ref{comp and sum} $i)$, it follows that $(U^*U, X_0^*X_0)$ is a Fredholm pair. Set $j(U^*U,X_0^*X_0)=k_0$. We have to show that $\cC \subseteq \cQ^{k_0}_{\cF}$. Let $\cP$ be the set of all orthogonal projections on $\ell^2$. Fixed $P=U^*U \in \cP$, we consider the following Grassmann  manifold
$$
Gr(P)=\{ \, Q \in \cP  \, : \, P-Q \text{ is Hilbert-Schmidt} \, \},
$$
which is naturally endowed  with the metric $d_2(Q_0,Q_1)=\|Q_0 - Q_1 \|_2$. It was shown in \cite[Remark 3.7]{C85} that the connected component of a projection $Q_0 \in Gr(P)$ can be described as
\begin{equation}\label{carey 2}
\{  \, Q \in Gr(P) \,  : \, j(Q_0,Q)=0 \, \}=\{ \, WQ_0W^* \, : \, W \in \cU_2(\ell^2) \, \},
\end{equation}
and hence by Remark \ref{comp and sum} $ii)$, the connected components of $Gr(P)$ are given by
$$
\{ \, Q \in Gr(P) :  \, j(P,Q)=k \},
$$
where the integer $k$ varies over all the possible values of the index. For instance, note that when the kernel and range of $P$ are both infinite dimensional the index can be any integer. However, if $P$ has finite dimensional range (resp. kernel), then the index cannot take values greater than $\dim \ran(P)$ (resp. lower than $\dim \ker(P)$).
Now using that the map $\varphi: \cQ_\cF \to Gr(P)$, $\varphi(\{ x_i \})=X^*X$, is continuous,  
we obtain that $\varphi(\cC)$ is connected. This implies that the index does not change in $\cC$, and hence $\cC \subseteq \cQ^{k_0}_{\cF}$.

To prove the reversed inclusion, we will see that $\cQ^{k_0}_\cF$ is connected. Take two Parseval frames $\{ x_i\}, \{ y_i\} \in \cQ^{k_0}_{\cF}$. Then  their synthesis operators satisfy $j(U^*U, X^*X)=j(U^*U,Y^*Y)=k_0$. Using Remark \ref{comp and sum}, it follows that 
$j(X^*X,Y^*Y)=j(X^*X,U^*U) + j(U^*U,Y^*Y)=-k_0+k_0=0$. According to Lemma \ref{stiefel man},   there are two unitaries $W \in \cU_2(\ell^2)$ and $V \in \cU_2(\cH)$ such that $X=VYW^*$.
Now recall that the Lie algebra of the group $\cU_2(\cH)$ is given by
$$
\mathfrak{u}_2(\cH)=\{ \, A \in \cB_2(\cH) \, : \, A=-A^* \, \},
$$
 and the exponential map  $\exp:\mathfrak{u}_2(\cH) \to \cU_2(\cH)$, $\exp(A)=e^A$ is surjective (see e.g. \cite[Remark 3.1]{AL08}). Then there exist $A=-A^*$ and $B=-B^*$  Hilbert-Schmidt operators such that $V=e^A$ and $W=e^B$.  Thus, there is a continuous path  joining $X$ and $Y$  given by
$
\gamma(t)= e^{tA} X e^{-tB} $, $0\leq t \leq 1
$. Note that $\gamma(t) - X$ is Hilbert-Schmidt and $j(X^*X,\gamma(t)^*\gamma(t))=0$ by the characterization in \eqref{carey 2}.
Then $\{ \gamma(t)e_i \}$ is a continuous path of Parseval frames joining $\{ x_i\}$ and $\{ y_i\}$ in $\cQ_\cF^{k_0}$.   Hence $\cQ_\cF^{k_0}$ is connected. 

The second description of the connected components is a direct consequence of the first and the following fact: if two partial isometries $X,Y$ are such that $X-Y$ is Hilbert-Schmidt, then $j(X^*X,Y^*Y)=0$ if and only if $j(XX^*,YY^*)=0$. This  in turn is an immediate consequence of Lemma \ref{stiefel man}. 

The third characterization of the connected components follows by noting that $\cK=\ran(U)$, $\cL=\ran(X)$, and  that
 $X^*XU^*U|_{\ran(U^*)}:\ran(U^*)\to \ran(X^*)$ is Fredholm if and only if $XU^*|_{\ran(U)}: \ran(U)\to \ran(X)$ is. Moreover, we clearly have $j(U^*U,X^*X)=\text{index}(XU^*|_{\ran(U)}: \ran(U)\to \ran(X))$.

\medskip

In order to determine the index set $\mathbb{J}$ labeling the connected components, recall that from the first and second characterizations of $\cQ_\cF^k$, any $k \in \mathbb{J}$ must satisfy
\begin{align*}
k & =\dim(\ker(Q)\cap \ran(P)) - \dim(\ker(P)\cap \ran(Q))\\
& = \dim(\cL^\perp \cap \cK) - \dim(\cL \cap \cK^\perp),
\end{align*}
where $P=U^*U$, $Q=X^*X$ and $X$  is the synthesis operator of a Parseval frame $\{ x_i \} \in \cQ_\cF^k$ with $\ran(X)=\cL$. Clearly, this implies the estimates
$-\min\{ n_1 , \, n_3\} \leq k \leq n_2$. We now show that there exist  Parseval frames for which the index takes all these values. 
From Lemma \ref{spectral}, there are orthonormal bases $\{ g_i\}$ and $\{ Ug_i\}$ of $\ran(P)$ and $\ran(F)$, respectively. Next take  $\{ g_i '\}$ and $\{ h_i \}$ orthonormal bases of $\ker(P)$ and $\ran(F)^\perp$, respectively. 
In the case where $n_1\leq n_3$, we can define for $-n_1 \leq k < 0$, the following partial isometry:
$Xg_i=Ug_i$ for all $i\geq 1$, $Xg_i'=h_i$ for $1\leq i \leq -k$, and $Xg_i'=0$ for $i>-k$. When $k=0$, we just set $X=U$, and when $0 <k \leq n_2$, we change the definition: take $i_1, \ldots ,i_k \in [1,n_2]$, put $Xg_i=Ug_i$ for $i \neq i_1, \ldots, i_k$, $Xg_{i_j}=0$ for $j=1, \ldots ,k$, and  $Xg_i'=0$ for all $i\geq 1$.  
For any of these definitions of $X$, the operator $U-X$ has finite rank and $j(P,Q)=k$. Thus the associated Parseval frame $\{ x_i\}$ belongs to $\cQ_\cF^k$. The case where $n_1\geq n_3$ follows similarly. 
Notice that from this description of $\mathbb{J}$, and since the underlying Hilbert space $\cH$ is infinite dimensional, then one of the following conditions holds true: $\min\{n_1, \, n_3 \}=\infty$ or $n_2=\infty$. Thus, there are always infinitely many connected components of $\cQ_\cF$.

\medskip

Finally, we prove that the connected components are orbits of the  action defined in \eqref{action uxu}. Using again that the group $\cU_2(\cH)$ is path connected, it follows that the orbit
$\cO(\{ x_i\})=\{ \, \{ VXWe_i \} \, : \,V \in \cU_2(\cH), \, W \in \cU_2(\ell^2) \, \}$ is connected, and thus $\cO(\{x_i \})\subseteq \cC$, where $\cC$ is the connected component of $\{ x_i\}$. Conversely, take $\{ y_i \} \in \cC$. Then its synthesis operator is such that $Y-U$ is Hilbert-Schmidt and  $j(U^*U,Y^*Y)=j(U^*U,X^*X)$ for some integer $k \in \mathbb{J}$. From Remark \ref{comp and sum}, it follows that $Y-X$ is Hilbert-Schmidt and $j(X^*X,Y^*Y)=0$. Then there are two unitaries   $V \in \cU_2(\cH)$ and  $W \in \cU_2(\ell^2)$ satisfying 
$Y=VXW^*$ by Lemma \ref{stiefel man}. This completes the proof. 
\end{proof}

\begin{rem} 
The action is locally transitive. Indeed, the following fact was proved (\cite[Lemma 3.1]{Ch10}): If $X,Y$ are two partial isometries such that  $\| X-Y\|_2<1$, then there exist two unitaries $V \in \cU_2(\cH)$, $W \in \cU_2(\ell^2)$ such that $Y=VXW^*$. This means that Parseval frames which have $\ell^2$-distance less than $1$ belong to the same connected component. In addition, if $k_0, k_1 \in \mathbb{J}$, $k_0\neq k_1$, then the distance between the corresponding connected components satisfies
$$
d(\cQ_\cF^{k_0},\cQ_\cF^{k_1})=\inf \left\{  \, \left(\sum_{i=1}^\infty  \| x_i - y_i\|^2 \right)^{1/2} \, : \, \{ x_i \} \in \cQ_\cF^{k_0}, \, \{ y_i\} \in \cQ_\cF^{k_1} \, \right\} \geq 1.
$$
\end{rem}


\section{Symmetric approximation in connected components}\label{symm connected comp}


Our aim in this section is to study symmetric approximation by Parseval frames in the connected components of $\cQ_\cF$ described in the previous section.   Actually, this is a particular case of symmetric approximation by a family of Parseval frames as we now define.



\begin{fed}\label{general symm app}
Let $\{ f_i\}$ be a frame in a subspace $\cK$ of a Hilbert space $\cH$. Let $\cQ$ be a nonempty family of Parseval frames in subspaces of  $\cH$.
A Parseval frame $\{ y_i \} \in \cQ$  is said to be a \textit{symmetric approximation of $\{ f_i\}$ in $\cQ$} if the inequality
$$
\sum_{i=1}^\infty \| f_i - y_i\|^2 \leq \sum_{i=1}^\infty \| f_i - x_i\|^2 ,
$$
is valid for all Parseval frames $\{ x_i\} \in \cQ$ and the sum at the left side of this inequality is finite.
\end{fed}

\begin{rem}
The last assumption of the definition avoids the cases where both sides of the inequality are infinite. 
However, the right side might be infinite. 
On the other hand, we observe that this definition of symmetric approximation by a family of Parseval frames  was previously given in \cite{AC16} for the finite dimensional setting. In addition, note that  if $\cQ_\cF^w$ denotes the family of all Parseval frames weakly similar to $\cF=\{ f_i \}$, then $\{ y_i\}$ is a symmetric approximation of $\{ f_i\}$ in $\cQ_\cF^w$ when $\{ y_i\}$ is a symmetric approximation in the sense of \cite{FPT02}.
\end{rem}

\bigskip








In order to study symmetric approximation in the connected components $\cQ_\cF^k$, $k \in \mathbb{J}$, we will need three supporting results. The first one is about the critical points of the map $
X \mapsto \| F-X\|_2 ^2
$, where $F$ is the synthesis operator of the frame $\{ f_i\}$ and $X$ is any synthesis operator (partial isometry) of a quadratically close Parseval frame. This result was proved in \cite[Thm. 4.1]{M89} for a positive operator $F$, and was generalized in a remark after its proof in that article to an arbitrary operator $F$ as we state below. 

\begin{lem}\label{maher}
Suppose that $Y$ is a local extremum of the map
$
X \mapsto \| F-X\|_2 ^2
$, where $X$ varies over those partial isometries such that $F-X$ is Hilbert-Schmidt. Then $Y^*F=F^*Y$.
\end{lem}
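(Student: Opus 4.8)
The plan is to read the desired identity as the first-order (Euler--Lagrange) condition for the functional $X \mapsto \|F-X\|_2^2$ at the extremum $Y$. Since $Y$ is a local extremum and the functional is finite and smooth along any smooth curve of admissible partial isometries passing through $Y$, the derivative of $t \mapsto \|F-\gamma(t)\|_2^2$ must vanish at $t=0$ for every such curve $\gamma$ with $\gamma(0)=Y$. It therefore suffices to exhibit enough admissible variations to force $Y^*F=F^*Y$, and for this I would use the right part of the action appearing in \eqref{action uxu}.

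Concretely, given a skew-adjoint Hilbert--Schmidt operator $B$ on $\ell^2$ (an element of $\mathfrak{u}_2(\ell^2)$), set $\gamma(t)=Ye^{tB}$. Because $e^{tB}$ is unitary, $\gamma(t)$ is again a partial isometry, its initial projection $\gamma(t)^*\gamma(t)=e^{-tB}(Y^*Y)e^{tB}$ being a conjugate of the projection $Y^*Y$; and since $e^{tB}-I$ is Hilbert--Schmidt, the operator $F-\gamma(t)=(F-Y)+Y(I-e^{tB})$ is Hilbert--Schmidt. Hence each $\gamma(t)$ is admissible and lies in the domain of the functional, and it stays $d_2$-close to $Y$ for small $t$, so $t=0$ is an interior local extremum of the smooth function $g(t):=\|F-\gamma(t)\|_2^2$. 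Differentiating the Hilbert--Schmidt inner product and using $\gamma'(0)=YB$ yields $g'(0)=-2\operatorname{Re}\langle F-Y,\,YB\rangle_2$, so extremality gives $\operatorname{Re}\langle F-Y,\,YB\rangle_2=0$ for every skew-adjoint Hilbert--Schmidt $B$.

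Next I would transcribe this into a statement about the Hilbert--Schmidt operator $M:=Y^*(F-Y)$ on $\ell^2$. Writing the inner product as a trace, $\langle F-Y,\,YB\rangle_2=\tr\!\big((YB)^*(F-Y)\big)=\tr(B^* M)$, so the condition reads $\operatorname{Re}\tr(B^* M)=\operatorname{Re}\langle M,B\rangle_2=0$ for all skew-adjoint Hilbert--Schmidt $B$; that is, $M$ is orthogonal, with respect to the real inner product $\operatorname{Re}\langle\cdot,\cdot\rangle_2$, to the subspace of skew-adjoint operators. Since every Hilbert--Schmidt operator splits as the orthogonal sum of its Hermitian and skew-Hermitian parts under this real inner product, $M$ must be Hermitian. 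Finally $M=Y^*F-Y^*Y$, and $Y^*Y$ is a self-adjoint projection, so $Y^*F=M+Y^*Y$ is self-adjoint, which is exactly $Y^*F=F^*Y$.

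The individual computations are routine once the variational family is fixed; the steps demanding care --- and the main obstacle --- are the admissibility check, namely confirming that $\gamma(t)=Ye^{tB}$ really remains inside the class of partial isometries that are Hilbert--Schmidt-close to $F$ so that $t=0$ is an interior extremum of a genuinely well-defined smooth function, together with the clean identification of the orthogonal complement of the skew-adjoint operators as the Hermitian ones in the real Hilbert--Schmidt geometry. I note that these \emph{right} variations alone yield the full conclusion; the companion \emph{left} variations $\gamma(t)=e^{tA}Y$ would instead produce the identity $FY^*=YF^*$, which is not needed for this statement.
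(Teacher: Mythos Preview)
The paper does not supply its own proof of this lemma; it simply quotes the result from Maher \cite[Thm.~4.1]{M89}. Your variational argument is correct and is exactly the standard first-variation computation underlying that result: perturb $Y$ along the admissible curves $\gamma(t)=Ye^{tB}$ with $B\in\mathfrak{u}_2(\ell^2)$, set $g'(0)=0$, and read off that $Y^*(F-Y)$ is Hermitian, hence $Y^*F=F^*Y$. The checks you single out (that $\gamma(t)$ stays in the domain and is $d_2$-close to $Y$, smoothness of $g$, and the real-orthogonal splitting of $\cB_2(\ell^2)$ into Hermitian and skew-Hermitian parts) all go through as written.
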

Taking the adjoint in the definition of the above map, one also has $YF^*=FY^*$. 
By a basic result in matrix theory that goes back to \cite[Thm. II]{EY36}, the relations  $Y^*F=F^*Y$ and  $YF^*=FY^*$ between two matrices $F$, $X$ are equivalent to the existence of a  simultaneous singular value decomposition. Below we give a version  of the simultaneous singular value decomposition for the type of operators we need. Its proof is adapted from the matrix case (see \cite[Thm. 2.7.12]{MBM10}). It works in our case due the expression of the operator $F$ in Lemma \ref{spectral} and the fact that $F-Y$ is compact.

\begin{lem}\label{sim svd}
Let $F$ be an operator with closed range. Let $Y$ be a partial isometry such that $F-Y$ is a compact operator and   $YF^*=FY^*$ and $Y^*F=F^*Y$. Then there exist orthonormal bases $\{ v_i \}$ of $\ell^2$ and $\{ h_i\}$ of $\cH$, and two sequences $\{ r_i \}_1 ^\alpha$ and $\{ t_i \}_1^\beta$ such that
$$
F=\sum_{i=1}^\alpha s_i  h_{r_i} \otimes v_{r_i} \, , \, \, \, \, 1 \leq \alpha \leq \infty,
$$
and 
$$ 
Y=\sum_{i=1}^\beta \epsilon_i  h_{t_i} \otimes v_{t_i} \, , \, \, \, \, 1 \leq \beta \leq \infty,
$$
where $\{ s_i \}_1^\alpha$ are the nonzero singular values of $F$  and $\epsilon_i \in \{ -1,  1 \, \}$.
\end{lem}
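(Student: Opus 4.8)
The plan is to upgrade the two relations $Y^*F=F^*Y$ and $YF^*=FY^*$ into a single intertwining relation that block-diagonalizes $F$ and $Y$ simultaneously, and then to diagonalize each block separately. First I would set $Q:=Y^*Y$ and $R:=YY^*$, the orthogonal projections onto $\ker(Y)^\perp$ and $\ran(Y)$. Multiplying $Y^*F=F^*Y$ on the left by $Y$ and then substituting $YF^*=FY^*$ yields $YY^*F=YF^*Y=FY^*Y$, that is $RF=FQ$. Consequently $F$ carries $\ran(Q)=\ker(Y)^\perp$ into $\ran(R)=\ran(Y)$ and $\ker(Q)=\ker(Y)$ into $\ker(R)=\ran(Y)^\perp$, so with respect to $\ell^2=\ran(Q)\oplus\ker(Q)$ and $\cH=\ran(R)\oplus\ker(R)$ the operator splits as a block-diagonal $F=F_1\oplus F_2$. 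Likewise $Y=Y_1\oplus 0$, where $Y_1:\ran(Q)\to\ran(R)$ is the restriction of the partial isometry to its initial space and is therefore unitary.

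On the first block I would note that $A:=Y^*F$ is self-adjoint (this is precisely the hypothesis $Y^*F=F^*Y$), that it annihilates $\ker(Q)$, and hence that its compression $C:=Y_1^*F_1$ to $\ran(Q)$ is a self-adjoint operator with $F_1=Y_1C$. Since $F-Y$ is compact and $F_1-Y_1=(F-Y)|_{\ran(Q)}$, the operator $C-I=Y_1^*(F_1-Y_1)$ is compact, so $C$ is a self-adjoint compact perturbation of the identity; the spectral theorem then furnishes an orthonormal eigenbasis $\{v_i\}$ of $\ran(Q)$ with real eigenvalues $\mu_i\to 1$. Putting $w_i:=Y_1v_i$ (an orthonormal basis of $\ran(R)$) gives $Fv_i=\mu_i w_i$ and $Yv_i=w_i$. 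Replacing $w_i$ by $\sgn(\mu_i)w_i$ when $\mu_i\neq 0$, and writing $h_i$ for the normalized vector, this becomes $Fv_i=|\mu_i|\,h_i$ and $Yv_i=\epsilon_i h_i$ with $\epsilon_i=\sgn(\mu_i)\in\{-1,1\}$; the nonzero $|\mu_i|$ are exactly the singular values of $F_1$ because $F_1^*F_1=CY_1^*Y_1C=C^2$.

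On the second block $Y_2=0$, so $F_2=(F-Y)|_{\ker(Q)}$ is compact. Since $F$ has closed range and that range splits orthogonally as $\ran(F_1)\oplus\ran(F_2)$, the range of $F_2$ is also closed, and a compact operator with closed range is finite rank. Thus $F_2$ admits an ordinary finite singular value decomposition $F_2=\sum_j\sigma_j\,\phi_j\otimes\psi_j$ with $\{\psi_j\}\subset\ker(Q)$ and $\{\phi_j\}\subset\ker(R)$ orthonormal and $\sigma_j>0$, while $Y$ kills each $\psi_j$. Concatenating the vectors $v_i,h_i$ from the first block, the vectors $\psi_j,\phi_j$ from the second, and arbitrary orthonormal bases of the residual subspaces $\ker(F)\cap\ker(Y)\subseteq\ell^2$ and $\ran(F)^\perp\cap\ran(Y)^\perp\subseteq\cH$, I obtain orthonormal bases $\{v_i\}$ of $\ell^2$ and $\{h_i\}$ of $\cH$; letting $\{r_i\}$ index the vectors carrying a nonzero singular value and $\{t_i\}$ those with $Yv_i\neq 0$ gives exactly the two displayed expansions, with $\{s_i\}$ the nonzero singular values of $F$ and $\epsilon_i\in\{-1,1\}$.

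The genuinely analytic input is concentrated in the first block — self-adjointness together with $C=I+(\text{compact})$ guarantees a \emph{real}, hence signed, diagonalization with eigenvalues accumulating only at $1$ — and in the closed-range-plus-compact argument forcing $F_2$ to be finite rank. The step I expect to demand the most care is the final bookkeeping: matching the two bases over a common index set while correctly tracking membership in $\{r_i\}$ and $\{t_i\}$, pairing up the residual subspaces (here the infinite-dimensionality and separability of both $\ell^2$ and $\cH$ are used to place everything over one index set), and checking that the sign normalization really reproduces $F$ and $Y$ and not merely their absolute values.
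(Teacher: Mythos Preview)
Your argument is correct and genuinely different from the paper's. The paper block-decomposes with respect to $F$: writing $F=P\begin{pmatrix}\Delta&0\\0&0\end{pmatrix}Q^*$ via Lemma~\ref{spectral}, it sets $A=F-Y$, shows from the commutation relations that the off-diagonal blocks of $A$ vanish and that the top-left block $B$ is self-adjoint and commutes with $\Delta$, then diagonalizes $B$ block-by-block along the eigenspaces of $\Delta$ and does a separate SVD on the compact bottom-right block. You instead block-decompose with respect to $Y$: the identity $RF=FQ$ (obtained by combining both hypotheses) gives $F=F_1\oplus F_2$ directly, and since $Y_1$ is unitary the whole first block reduces to diagonalizing the single self-adjoint operator $C=Y_1^*F_1$, which is a compact perturbation of the identity and hence has an orthonormal eigenbasis. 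Your route is shorter and avoids the commutation step $\Delta B=B\Delta$ together with the block-unitary construction; the paper's route stays closer to the classical matrix-theoretic simultaneous SVD and makes explicit use of the singular value decomposition of $F$ already established in Lemma~\ref{spectral}. Your closed-range argument for $F_2$ (via $\ran(F_2)=\ran(F)\cap\ker(R)$) is a nice touch that the paper handles differently, by observing that the bottom-right block of $A$ is compact and taking its SVD.
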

\begin{proof}
 Recall that Lemma \ref{spectral} gives $F=\sum_{i=1}^\alpha s_i(Ug_i)\otimes g_i$, where $\{ g_i\}_1^\alpha$ and $\{ Ug_i \}_1^\alpha$ are  orthonormal bases of $\ker(F)^\perp$ and $\ran(F)$, respectively, and $\{ s_i\}_1^\alpha$ are the nonzero singular values of $F$.   We can complete them to orthonormal bases of $\ell^2$ and $\cH$, which induce two unitary operators $Q \in \cU(\ell^2)$ and $P \in \cU(\cH)$. In the coordinates given by these bases we write
$$
F=P\begin{pmatrix} \Delta & 0 \\ 0 & 0 \end{pmatrix}Q^*,
$$
where   $\Delta=diag(s_1, s_2 , \ldots)$ is an invertible diagonal matrix.
The compact operator defined by $A=F-Y$ also satisfies $AF^*=FA^*$ and $A^*F=F^*A$. Using the block decomposition with respect to
$\ell^2=Q^*(\ker(F))^\perp \oplus Q^*(\ker(F))$ and $\cH=P^*(\ran(F))\oplus P^*(\ran(F))^\perp$, we write in the same coordinates as above
$$
A=P	\begin{pmatrix} B & C \\ D & E \end{pmatrix} Q^*.
$$
Next note that $FA^*$ is self-adjoint, so $P^*FA^*P$ is also self-adjoint, and
$$
P^*FA^*P=(P^*FQ)(Q^*A^*P)=\begin{pmatrix} \Delta B^* & \Delta D^* \\  0 & 0 \end{pmatrix}.
$$
This implies that $D=0$ and $\Delta B^*=B \Delta$. Applying a similar argument to the self-adjoint matrix $Q^*F^*AQ$, 
one sees that $C=0$ and $\Delta B=B^*\Delta$. Therefore, $\Delta^2 B =B \Delta ^2$, which gives $\Delta B =B \Delta $ and $B=B^*$.
Since $\Delta$ is diagonal, there are a block unitary matrix $R$ and a real diagonal matrix $\Lambda$ such that 
$R^*\Delta R=\Delta$ and $R^*B R=\Lambda$. The size of the blocks of $R$ are given by the multiplicities of the nonzero singular values $\{ s_i \}_1^\alpha$.

Next we use that $E$ is also a compact matrix, so it admits a singular value decomposition, which in matrix form means that $E=X\Gamma Z^*$ for two unitaries $X$ and $Z$, and a diagonal matrix $\Gamma$ with non negative entries.  
Set 
$$
W=P\begin{pmatrix}  R & 0 \\ 0 & X \end{pmatrix}, \, \, \, \, V=Q \begin{pmatrix}  R & 0 \\ 0 & Z \end{pmatrix}.
$$
It follows that 
$$
W^*FV=\begin{pmatrix}   \Delta & 0 \\ 0 & 0 \end{pmatrix}, \, \, \, \, 
W^*AV=\begin{pmatrix}   \Lambda & 0 \\ 0 & \Gamma \end{pmatrix}. \,  \, \, \, 
$$
Therefore 
$$
W^*YV=\begin{pmatrix}     \Delta - \Lambda  & 0 \\ 0 & -\Gamma
\end{pmatrix}
$$
is a real diagonal matrix. Since $Y$ is a partial isometry, $W^*YV$ is also a partial isometry, and thus $V^*Y^*YV=(W^*YV)^*(W^*YV)$ is a projection. Hence the only possible values of the diagonal entries of $W^*YV$ are $-1,0,1$. Then the desired orthonormal bases  $\{ v_i \}$ and $\{ h_i\}$ are defined  by the unitaries $V$ and $W$. The sequences $\{ r_i\}_1^\alpha$ and $\{ t_i\}_1^\beta$ can be  chosen to satisfy  that  $\{ v_{r_i} \}_1 ^\alpha$  and $\{ h_{r_i}\}_1^\alpha$ are orthonormal bases of $\ker(F)^\perp$ and $\ran(F)$, and 
 $\{ v_{t_i} \}_1 ^\beta$  and $\{ h_{t_i}\}_1^\beta$ are orthonormal bases of $\ker(Y)^\perp$ and $\ran(Y)$. 
\end{proof}


Using the previous lemma, we will  reduce problems of existence and uniqueness of symmetric approximation of frames to similar problems for diagonal operators. This can be thought as the  optimization problem involving sequences that we discuss below.

Fix $\{ a_i \}$ a sequence of non negative numbers such that 
\begin{equation}\label{suc hip}
\sum_{i \, : \, a_i \neq 0 }(a_i-1)^2<\infty.
\end{equation}
Consider the  set
$$
\cB:=\left\{ \, \{ b_i \} \, : \, b_i \in \{ -1 , \, 0 , \, 1 \, \}, \, \, \, \sum_{i=1}^\infty (a_i - b_i)^2 < \infty \, \right\}.
$$
 For every sequence $\{ b_i\} \in \cB$, it is straightforward to see that  the following version of the index  is well defined
$$
j(\{ a_i\}, \{ b_i \}):=\# \{ \, i \, : \, b_i=0, \, a_i>0 \,\}-\# \{ \, i \, : \, b_i\neq 0, \, a_i=0 \,\}.
$$
Actually, this is the index of the pair of  projections onto
the orthogonal complements of the kernels of the diagonal operators defined by $\{ a_i\}$ and $\{ b_i\}$. The values of the index give a decomposition of $\cB$ in the following sets: 
$$
\cB^k:=\{ \, \{ b_i\} \in \cB \, : \,j(\{ a_i\}, \{ b_i \})=k  \, \}, 
$$
where the integers $k$ can  take all the values 
\begin{equation}\label{bounds index}
-\#\{ \, i \, : a_i = 0 \, \} \leq k \leq \# \{ \, i \, : a_i >0 \, \}.
\end{equation}
Here the lower and upper bounds can be $-\infty$ and $\infty$.
Notice that the value $k=0$ is always achieved   by choosing $b_i=1$ if $a_i >0$ and $b_i=0$ if $a_i=0$, and using the assumption in \eqref{suc hip}. 

\begin{lem}\label{diagonal}
With the notation above, consider the function
$$
f: \cB \to \R, \, \, \, \, f(\{ b_i\})=\sum_{i=1}^\infty (a_i-b_i)^2 \,.
$$
For any integer $k$ satisfying \eqref{bounds index},  the minimizers of the function $f$ restricted to the set $\cB^k$ are given as follows:
\begin{itemize}
\item[i)] If $k=0$, then there is a unique minimizer given by
$$
b_i = \begin{cases} 1 & \mbox{if }\, \, a_i \neq 0, \\ 0 & \mbox{if } \, \, a_i=0. \end{cases}
$$
\item[ii)] If  $k<0$, then the set $
\mathbb{A}^k:= \{ \, i \, : \, a_i=0 \, \}$ satisfies $\#\mathbb{A}^k\geq -k$. There are $2^{-k}\binom{\#\mathbb{A}^k}{-k}$  minimizers if $\#\mathbb{A}^k< \infty$, or infinitely many minimizers if $\#\mathbb{A}^k= \infty$, which   can be described as follows.
For $i_1, \ldots, i_{-k} \in \mathbb{A}^k$, 
\begin{equation}\label{minimizers k<0}
b_i  = \begin{cases} 1 & \mbox{ if }\, \, a_i \neq 0, \\ 
\pm 1 & \mbox{ if }  \, \, i=i_1, \ldots, i_{-k}, \\
0 & \mbox{ if } \, \,  a_i=0 \mbox{ and } i \neq i_1, \ldots, i_{-k}. \end{cases}
\end{equation}
\item[iii)] If $k>0$, then there exists a minimizer  if and only if there are $k$ nonzero lowest numbers $a_{n_1}\leq  \ldots \leq a_{n_k}$ in the sequence $\{ a_i \}$. In this case, set 
$$
\mathbb{A}^k:=\{ \,\{n_1, \ldots, n_k\}\, : \, 0<a_{n_1}\leq \ldots \leq a_{n_k}\leq a_j\, , \,  \,  \forall \, j\neq n_1, \ldots, n_k, \, a_j \neq 0  \,\}.
$$ 
Then all the minimizers can be described as follows: for $\{n_1, \ldots, n_k\} \in \mathbb{A}^k$,
$$
b_i  = \begin{cases} 1 & \mbox{ if }\, \, a_i \neq 0, \, i \neq n_1 , \ldots , n_k, \\ 
 0 & \mbox{ if }  \, \, a_i=0 \mbox{ or } i=n_1, \ldots, n_{k}. \end{cases}
$$
Thus,  there is a unique minimizer if and only if $\#\mathbb{A}^k=1$.
\end{itemize}
\end{lem}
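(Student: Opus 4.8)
The plan is to reduce the constrained minimization to a finite, coordinate-by-coordinate comparison against one fixed reference sequence. Define $b^{(0)}$ by $b^{(0)}_i=1$ if $a_i\neq 0$ and $b^{(0)}_i=0$ if $a_i=0$; by \eqref{suc hip} this lies in $\cB$, it has index $0$, and $f(b^{(0)})=\sum_{i\,:\,a_i\neq 0}(a_i-1)^2=:C_0<\infty$. The first and crucial step is to observe that \emph{every} $\{b_i\}\in\cB$ differs from $b^{(0)}$ in only finitely many coordinates. Indeed, \eqref{suc hip} forces $a_i\to 1$ along the indices with $a_i\neq 0$; hence for such an $i$ with $b_i\in\{0,-1\}$ the term $(a_i-b_i)^2$ stays bounded away from $0$, so convergence of $\sum_i(a_i-b_i)^2$ admits only finitely many of them, and likewise only finitely many indices with $a_i=0$ can carry $b_i\neq 0$. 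This finiteness is what lets me treat the problem as a finite optimization and sidestep all convergence issues.

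The second step is to compute the cost relative to this reference. For $\{b_i\}\in\cB$ I partition the finitely many coordinates where $b_i\neq b^{(0)}_i$ into three types, $S_+=\{i:a_i>0,\,b_i=0\}$, $S_-=\{i:a_i=0,\,b_i\neq 0\}$ and $S_0=\{i:a_i>0,\,b_i=-1\}$, and a direct expansion of $(a_i-b_i)^2-(a_i-b^{(0)}_i)^2$ gives
$$
f(\{b_i\})-C_0=\sum_{i\in S_+}(2a_i-1)+|S_-|+\sum_{i\in S_0}4a_i,
$$
while the index equals $j(\{a_i\},\{b_i\})=|S_+|-|S_-|$. Since $a_i>0$ on $S_0$, the last sum is nonnegative and vanishes only when $S_0=\emptyset$, so no minimizer uses a coordinate of type $S_0$; I may therefore work with the constraint $|S_+|-|S_-|=k$ and the reduced cost $\sum_{i\in S_+}(2a_i-1)+|S_-|$.

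The third step is the case analysis dictated by the sign of $k$. For $k=0$, substituting $|S_-|=|S_+|$ gives cost $C_0+\sum_{i\in S_+}2a_i\geq C_0$, with equality exactly when $S_+=\emptyset$ (and then $S_-=\emptyset$), yielding the unique minimizer $b^{(0)}$ of (i). For $k<0$, substituting $|S_-|=|S_+|-k$ gives cost $C_0+\sum_{i\in S_+}2a_i+(-k)\geq C_0-k$, with equality iff $S_+=\emptyset$ and $|S_-|=-k$; since $S_-\subseteq\mathbb{A}^k=\{i:a_i=0\}$ this forces $\#\mathbb{A}^k\geq -k$, and the minimizers are precisely the $\binom{\#\mathbb{A}^k}{-k}$ choices of $S_-$ combined with the $2^{-k}$ sign assignments $b_i\in\{\pm 1\}$ on $S_-$, which is (ii). For $k>0$, substituting $|S_-|=|S_+|-k$ gives cost $C_0+\sum_{i\in S_+}2a_i-k$ with $|S_+|\geq k$; enlarging $S_+$ beyond $k$ indices only adds positive terms $2a_i$, so any minimizer must have $|S_+|=k$, $S_-=\emptyset$, and must select indices realizing the $k$ smallest positive values. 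This is where the only genuine obstacle lies: because the positive $a_i$ merely accumulate at $1$, their $k$ smallest values need not be attained, and the infimum $C_0+2\sum_{j=1}^{k}a_{n_j}-k$ is achieved if and only if the set $\mathbb{A}^k$ of index-sets realizing the $k$ lowest positive values is nonempty. In that case distinct elements of $\mathbb{A}^k$ determine distinct minimizers, so the minimizer is unique exactly when $\#\mathbb{A}^k=1$, which is (iii); the admissible range of $k$ recorded in \eqref{bounds index} emerges automatically from these same constraints.
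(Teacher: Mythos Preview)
Your proof is correct, and it is organized differently from the paper's. The paper argues each case by elementary swap inequalities: for instance, in (i) it uses $(a-1)^2<a^2+1$ to see that a simultaneous pair $(i,j)$ with $a_i>0,\,b_i=0$ and $a_j=0,\,b_j\neq 0$ can be swapped to lower the cost while preserving the index, forcing both counts to vanish, and then $(a-1)^2<(a+1)^2$ rules out $b_i=-1$; (ii) and (iii) are handled by the same kind of local exchanges, with the converse in (iii) done via the swap $(a-1)^2+b^2<a^2+(b-1)^2$ for $b<a$.

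Your approach instead fixes the reference $b^{(0)}$, proves once and for all that every $\{b_i\}\in\cB$ differs from $b^{(0)}$ at only finitely many coordinates, and derives the closed formula
\[
f(\{b_i\})-C_0=\sum_{i\in S_+}(2a_i-1)+|S_-|+\sum_{i\in S_0}4a_i,\qquad j(\{a_i\},\{b_i\})=|S_+|-|S_-|,
\]
from which all three cases fall out by straightforward arithmetic. This is a genuinely different route: it trades the paper's repeated ad hoc inequalities for one structural reduction to a finite problem with an explicit cost. The gain is a cleaner, more uniform argument that also makes the minimum values in Remark~\ref{distance} immediately visible; the paper's swap argument, on the other hand, is slightly more self-contained in that it never needs the finiteness observation. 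Either way the content is the same, and your treatment of the existence obstruction in (iii) (that the $k$ smallest positive $a_i$ may fail to be attained when the sequence merely accumulates at $1$) matches the paper's.
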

\begin{proof}
\noi $i)$  The condition imposed by the index and the inequality $(a-1)^2<a^2+1$ for $a>0$, implies that any candidate to be a minimizer $\{ b_i\}$ must satisfy 
$$
\# \{ \, i \, : \, b_i=0, \, a_i>0 \,\}=\# \{ \, i \, : \, b_i\neq 0, \, a_i=0 \,\}=0.
$$
This means that $b_i=\pm 1$ if $a_i>0$, and $b_i=0$ if $a_i=0$.  Then the inequality $(a-1)^2<(a+1)^2$ for $a>0$ can be used to deduce that there is a unique minimizer of the desired form.  

\medskip

\noi $ii)$ Note that $\# \mathbb{A}^k\geq \# \{ \, i \, : \, b_i\neq 0, \,  a_i=0\, \}\geq -k$ because $\cB^k\neq \emptyset$.  The proof is similar to the previous item, we only remark that for the indices $i_1, \ldots , i_{-k}$, we can have $b_{i_j}=1$ or $b_{i_j}=-1$. Hence  there are $2^{-k}\binom{\#\mathbb{A}^k}{-k}$  minimizers if $\#\mathbb{A}^k< \infty$, or infinitely many minimizers if $\#\mathbb{A}^k= \infty$.

\medskip

\noi $iii)$ Suppose that there are $k$ values of the sequence $\{ a_ i\}$ such that $0<a_{n_1}\leq \ldots \leq a_{n_k}\leq a_j$ for all $j \neq n_1, \ldots, n_k$, $a_j\neq 0$. Again from the condition given by the index and the inequality $(a-1)^2<a^2+1$ for $a>0$, any candidate to be a minimizer $\{ b_i\}$ have to satisfy
\begin{equation}\label{algunos zeros}
\# \{ \, i \, : \, b_i=0, \, a_i>0 \,\}=k , \, \, \, \, \# \{ \, i \, : \, b_i\neq 0, \, a_i=0 \,\}=0.
\end{equation}
Then, using the inequality $(a-1)^2+ b^2 < a^2 + (b-1)^2$ for $b < a$, we conclude that if we place $k$ zeros on the indices $ n_1, \ldots, n_k$ of a sequence $\{ b_i \}$, then we obtain a minimizer. The set $\mathbb{A}^k$ is defined to take into account the possibly repetition of the $k$ lowest values of $\{ a_i\}$ which leads to other minimizers. Indeed, if $\{ n_1 , \ldots , n_k\}, \, \{n_1' , \ldots , n_k' \} \in \mathbb{A}^k$, then $a_{n_1}=a_{n_1'}, \ldots, a_{n_k}=a_{n_k'}$. This implies that  $f(\{ b_i\})=f(\{ b_i'\})$, where $\{ b_i\}$ and $\{ b_i'\}$ are the minimizing sequences in $\cB^k$ associated to $\{ n_1 , \ldots , n_k\}$ and $\{n_1' , \ldots , n_k' \}$, respectively.  On the other hand, it is now clear that there is unique minimizer when  $\#\mathbb{A}^k=1$.

In order to prove the converse, assume that do not exist $k$ lowest values of $\{ a_i\}$. 
As before, if we suppose that there is minimizer $\{ b_i \} \in \cB^k$, then it must satisfy \eqref{algunos zeros}. Thus, $b_{n_1}=\ldots =b_{n_k}=0$ for some integers $n_1, \ldots , n_k$. By hypothesis there exists $n_0 \neq n_1, \ldots, n_k$ such that $0<a_{n_0}<a_{n_j}$ for some $j=1, \ldots, k$. We define another sequence $\{ b'_i \} \in \cB^k$ as follows: $b'_i=b_i$ if $i \neq n_0 , n_j$, $b'_{n_0}=0$, and $b_{n_j}=1$. Since $(a_{n_j}-1)^2 + a_{n_0}<a_{n_j}^2 + (a_{n_0}-1)^2$, we have $f(\{ b_i'\})<f(\{ b_i\})$. This is a contradiction, which shows that the function $f$ does not have minimizers in this case. 
\end{proof}



Our first result on  symmetric approximation now follows. It gives the existence and uniqueness of symmetric approximation in the connected component $\cQ_\cF^0$. In the statement we assume the existence of quadratically close Parseval frames, which  is  a necessary condition to seek for symmetric approximations.

\begin{teo}\label{canonico}
Let $\cF=\{ f_i\}$ be a frame in a subspace $\cK\subseteq \cH$, and let $\{ u_i\}$ be its canonical Parseval frame. Suppose that there exists a Parseval frame quadratically close to $\{ f_i\}$. Then the estimate
$$
\sum_{i=1}^\infty \| f_i - u_i \|^2 \leq \sum_{i=1}^\infty \| f_i - x_i \|^2
$$
is valid for all Parseval frames $\{ x_i\} \in \cQ_\cF^0$. Furthermore, equality holds if and only if $\{ x_i \}=\{ u_i \}$.
\end{teo}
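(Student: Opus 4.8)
The plan is to recast the statement, via the homeomorphism of Remark~\ref{homeo affine}, as the assertion that the canonical partial isometry $U$ is the unique global minimizer of $N(X):=\|F-X\|_2^2$ over the connected component $\cQ_\cF^0$, where a Parseval frame is identified with its synthesis operator. First I would record that $U\in\cQ_\cF^0$ (since $j(U^*U,U^*U)=0$) and that, by Lemma~\ref{spectral}, $F-U=\sum_i (s_i-1)\,(Ug_i)\otimes g_i$, so that $N(U)=\sum_i (s_i-1)^2$; this is finite by the hypothesis together with Theorem~\ref{cuadratically close}. It then remains to prove the estimate $N(U)\le N(X)$ for every $X\in\cQ_\cF^0$ and that equality forces $X=U$.

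For the estimate I would re-run the dominance argument from the proof of Theorem~\ref{cuadratically close}. The only place that proof modified $F$ and $X$ was to reduce to a Fredholm pair of index zero; but for $X\in\cQ_\cF^0$ the pair $(U^*U,X^*X)$ already has index zero, so no block modification is needed and the Ky--Fan estimate
$$
\|F-U\|_{(m)}\le \|F-X\|_{(m)},\qquad m\ge 1,
$$
holds verbatim. Since this says precisely that the singular values of $F-U$ are submajorized by those of $F-X$, and since $t\mapsto t^2$ is convex and nondecreasing on $[0,\infty)$ (so that the Hilbert--Schmidt norm respects submajorization), I obtain $\|F-U\|_2\le\|F-X\|_2$, which is the desired inequality; in particular $U$ attains the minimum of $N$ on $\cQ_\cF^0$.

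For uniqueness, suppose $Y\in\cQ_\cF^0$ satisfies $N(Y)=N(U)$. Then $Y$ is a global minimizer of $N$ on $\cQ_\cF^0$, and since every partial isometry within Hilbert--Schmidt distance $<1$ of $Y$ lies in the same component, $Y$ is in fact a local extremum of $X\mapsto\|F-X\|_2^2$ over all quadratically close partial isometries. Lemma~\ref{maher} then gives $Y^*F=F^*Y$ and, taking adjoints, $YF^*=FY^*$; as $F-Y$ is Hilbert--Schmidt and hence compact, Lemma~\ref{sim svd} produces orthonormal bases $\{v_i\}$ of $\ell^2$ and $\{h_i\}$ of $\cH$ in which $Fv_i=a_ih_i$ and $Yv_i=b_ih_i$, with $a_i\ge 0$ running over the singular values of $F$ (padded by zeros) and $b_i\in\{-1,0,1\}$. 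Here $\sum_{a_i\neq 0}(a_i-1)^2=N(U)<\infty$, so $\{a_i\}$ satisfies \eqref{suc hip}, and identifying the kernels of the diagonal operators shows $j(\{a_i\},\{b_i\})=j(U^*U,Y^*Y)=0$, i.e.\ $\{b_i\}\in\cB^0$. Since $N(Y)=\sum_i(a_i-b_i)^2=f(\{b_i\})$ equals the minimum, Lemma~\ref{diagonal}~i) forces $b_i=1$ when $a_i\neq 0$ and $b_i=0$ when $a_i=0$; this is exactly the diagonal form of $U$, whence $Y=U$ and $\{y_i\}=\{u_i\}$.

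The step I expect to be the main obstacle is the global estimate, not the uniqueness. The critical-point lemmas (Lemmas~\ref{maher}, \ref{sim svd} and \ref{diagonal}) only characterize a minimizer once one is known to exist and to be critical, and $\cQ_\cF^0$ is a non-compact orbit on which existence is not automatic. The substantive input is therefore the Ky--Fan/submajorization bound, which here transfers with no extra work from Theorem~\ref{cuadratically close} precisely because the index is already zero; the subsequent reduction to the scalar minimization of Lemma~\ref{diagonal} is then essentially bookkeeping.
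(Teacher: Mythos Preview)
Your proposal is correct and follows essentially the same route as the paper: the estimate is obtained by rerunning the Mirsky--based approximation argument of Theorem~\ref{cuadratically close} (the paper redoes it directly in the Hilbert--Schmidt norm using $L\in\cU_2(\ell^2)$, while you cite the Ky--Fan inequalities already proved there and pass to $\|\cdot\|_2$ via submajorization, which is a harmless variant), and uniqueness is deduced exactly as in the paper from Lemmas~\ref{maher}, \ref{sim svd} and~\ref{diagonal}~i). Your explicit justification, via the locally transitive action, that a minimizer on $\cQ_\cF^0$ is a local extremum among all quadratically close partial isometries is a point the paper leaves implicit.
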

\begin{proof}
The proof  of the existence of symmetric approximations is similar to Theorem \ref{cuadratically close}, but  we do not have to deal with the Fredholm unitary group and the Ky-Fan norms now. We begin by noting that by Theorem \ref{cuadratically close}, there exists a Parseval frame quadratically close to $\{ f_i\}$ if and only if $\sum_{i=1}^\infty \| f_i -u_i\|^2<\infty$. This gives that $\cQ_\cF^0 \neq \emptyset$ and also the last condition in  Definition \ref{general symm app} of symmetric approximation is verified.   

 As usual, we write $F=U|F|$ for the polar decomposition. Now take an arbitrary Parseval frame $\{ x_i\} \in \cQ_\cF^0$. According to Proposition \ref{compo con}, this means that its synthesis operator $X$ is a partial isometry such that $X-U$ is Hilbert-Schmidt and $j(P,Q)=0$, where $Q=X^*X$ and $P=U^*U$ is the orthogonal projection onto $\ker(F)^\perp$. It follows that $P-Q$ is Hilbert-Schmidt.
As we stated in Remark \ref{holds for p2}, Theorem \ref{carey} holds if we replace  compact operators by  Hilbert-Schmidt operators and the Fredholm unitary group by the group $\cU_2(\cH)$. Hence
 there exists $L \in \cU_2(\ell^2)$ such that $Q=LPL^*$.

Applying Lemma \ref{spectral}, we obtain  an orthonormal basis $\{ g_i \}_1^\alpha$ of $\ker(F)^\perp$  satisfying
$
F=\sum_{i=1}^\alpha s_i (Ug_i) \otimes g_i
$, 
where $\{ s_i\}_1^\alpha$ are the nonzero singular values of $F$.  In what follows, we assume that $\alpha=\infty$ (the case $\alpha<\infty$ is easier since we do not need an approximation argument). Given $n \geq 1$, set
$$
P_n:= \sum_{i=1}^n  g_i \otimes g_i  \, ; \, \, \, \, \, \, Q_n:=LP_nL^*.
$$
Note that $\rank(P_n)=n$, $P_n\leq P$ and $Q_n\leq Q$.
Since the singular values satisfies
$s((F-U)P_n)=(|s_1-1|, \ldots, |s_n -1|)^{\downarrow}$,
$s(FP_n)=(s_1, \ldots , s_n)^{\downarrow}$ and $s(XQ_n)=(1, \ldots , 1)$,
it follows that
$$
s((F-U)P_n)=|s(FP_n) - s(XQ_n)|^{\downarrow}, \, \, \, \, n \geq 1.
$$
Then, by Theorem \ref{mirsky}, for all $m\geq 1$ we get 
\begin{align*}
\sum_{i=1}^m s_i((F-U)P_n) & = \sum_{i=1}^m |s(FP_n)-s(XQ_n)|_i ^{\downarrow} \\
& \leq \sum_{i=1}^m s_i(FP_n-XQ_n).
\end{align*}
This implies that 
$\|(F-U)P_n\|_2\leq \|FP_n - XQ_n\|_2$ by the  dominance property  \cite[p. 82]{GK60}.  Noting that  $P_n \to P$, $Q_n \to Q$ strongly, and that the operators $F-U$, $F-X$ and $L-I$ are Hilbert-Schmidt, we have $\|(F-U)P_n - (F-U)\|_2 \to 0$ and $\|FP_n - XQ_n - (F-X)\|_2 \to 0$
 (see \cite[Thm. 6.3]{GK60}). This gives 
$\| F-U\|_2\leq \|F-X\|_2$. Hence $\{ u_i \} = \{ Ue_i \}$ is a symmetric approximation of $\{ f_i\}$ in $\cQ_\cF^0$.  

\medskip

Now we prove the uniqueness. Suppose that $\{ y_i \}$ is another symmetric approximation  of the frame $\{ f_i\}$ in $\cQ_\cF^0$. This means that its synthesis
operator $Y$  satisfies $\|F-Y\|_2 \leq \|F-X\|_2$, for all  partial isometries $X$ which are  synthesis operators of Parseval frames $\{ x_i\} \in \cQ_\cF^0$. Therefore $Y$ is a local minimum of the differentiable map $X\mapsto \| F-X\|_2^2$, and by Lemma \ref{maher}, it follows that   $Y^*F=F^*Y$ and $YF^*=FY^*$. According to Lemma \ref{sim svd}, there exist orthonormal bases $\{ v_i \}$ of $\ell^2$ and $\{ h_i\}$ of $\cH$, and two sequences $\{ r_i \}_1 ^\alpha$ and $\{ t_i \}_1^\beta$ such that
$$
F=\sum_{i=1}^\alpha s_i  h_{r_i} \otimes v_{r_i} \, , \, \, \, \, \, \, \, Y=\sum_{i=1}^\beta \epsilon_i  h_{t_i} \otimes v_{t_i}\, ,
$$
where $\{ s_i \}_1^\alpha$ are the nonzero singular values of $F$  and $\epsilon_i \in \{ -1,  1 \, \}$. Using the notation of Lemma \ref{diagonal}, set
$$
a_i= \begin{cases} s_i  , \, \, \, \mbox{if } i = r_j \mbox{ for some } 1\leq j \leq \alpha, \\ 0, \, \, \, \mbox{ otherwise}; \end{cases}
\, \, \, \, \, \, \, \, \, \,
\tilde{b}_i= \begin{cases} \epsilon_i ,\, \, \, \mbox{if } i = t_j \mbox{ for some } 1\leq j \leq \beta, \\ 0, \, \, \, \mbox{ otherwise}. \end{cases}
$$
For each sequence $\{ b_i\} \in \cB^0$, we define a  partial isometry  $X=\sum_{i=1}^\infty b_i h_i \otimes v_i \in \cQ_\cF^0$. Then, 
\begin{align*}
  f(\{ b_i\}) & = \sum_{i=1}^\infty (a_i - b_i)^2 =\sum_{i=1}^\infty \| (F-X)v_i \|^2 
	= \| F-X \|_2^2 \\
& \geq   \|F-Y\|_2^2 
 = \sum_{i=1}^\infty \| (F-Y)v_i \|^2 = \sum_{i=1}^\infty (a_i - \tilde{b}_i)^2 =f(\{ \tilde{b}_i\}). 
\end{align*}
By Lemma \ref{diagonal} $i)$, we have that $\{ \tilde{b}_i\}$ is the unique minimizer of the function $f$ restricted to $\cB^0$. Thus, $\tilde{b}_i=1$ if $a_i >0$, and $\tilde{b}_i=0$ if $a_i=0$.
Hence,
$$
Y =\sum_{i=1}^\infty \tilde{b}_i h_i \otimes v_i 
 =\sum_{i=1}^\alpha h_{r_i} \otimes v_{r_i}   =U,
$$
which implies that $\{ y_i\}=\{ u_i\}$. 
\end{proof}

\begin{rem}
Given an arbitrary frame $\cF=\{ f_i\}$, consider the set 
$$
\cQ_\cF^w = \left\{ \, \{ x_i\}   \, : \, \{ x_i\} \text{ Parseval frame weakly similar to } \{ f_i\}, \, \sum_{i=1}^\infty\| x_i - f_i \|^2 < \infty \,  \right\}.
$$ 
Since $\cQ_\cF^w \subseteq \cQ_\cF^0$, it follows that Theorems \ref{cuadratically close} and \ref{canonico} generalize \cite[Thm. 2.3]{FPT02}. Further, the geometric structure of $\cQ^w_\cF$ can also be studied  using previous results for partial isometries  (see  \cite[Thm. 2.3]{Ch10}). For instance, we have that $\cQ^w_\cF$
  coincides with the orbit of the following action:
$$
 V \cdot \{ x_i \}=\{ VXe_i \}, \, \, \, V \in \cU_2(\cH),
$$
where $\{ x_i\}$ is any Parseval frame belonging to $\cQ_\cF^w$ with synthesis operator $X$. As a straightforward consequence  $\cQ_\cF^w$ is connected.  
\end{rem}

\bigskip


 We now study symmetric approximation restricted to connected components of the form $\cQ_\cF^k\neq \emptyset$, $k<0$.  We first observe that, according to Proposition \ref{compo con}, $\cQ_\cF^k\neq \emptyset$ for some $k<0$ if and only if
\begin{equation}\label{nonemp}
n_1=\dim \ker(F) >0 \, \, \, \text{ and }\,  \,  n_3=\dim \cK^\perp >0.
\end{equation}
  Our next result shows that  symmetric approximations in these type of connected components always exist, and it is possible to give a complete description of them.  


\begin{teo}\label{S aprox}
Let $\cF=\{ f_i \}$ be a frame in a subspace $\cK\subseteq \cH$. Let $F$ be its synthesis operator, $F=U|F|$ the polar decomposition and $\{ u_i\}=\{ Ue_i\}$ the canonical Parseval frame. Assume that both conditions in \eqref{nonemp} hold. Then for any integer satisfying $ -\min\{ n_1, \, n_3 \} \leq  k \leq -1 $, and every partial isometry $S:\ker(F)\to \ran(F)^\perp$  such that $\rank(S)=-k$, the estimate
$$
\sum_{i=1}^\infty \| f_i - u_i-S(I-U^*U)e_i \|^2 \leq  \sum_{i=1}^\infty \| f_i - x_i \|^2
$$ 
is valid for all Parseval frames  $\{ x_i\} \in \cQ^k_\cF$. Thus, there are always infinitely many symmetric approximations of $\{ f_i\}$ in $\cQ_\cF^k$. Moreover, all the  symmetric approximations are given by $\{ u_i+S(I-U^*U)e_i \}$, where $S$ is a partial isometry satisfying the above conditions. 
\end{teo}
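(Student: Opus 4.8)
The plan is to split the argument into a lower bound (which gives existence) and a description of all minimizers, following the pattern of Theorem \ref{canonico} but reducing to part (ii) of Lemma \ref{diagonal}.

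First I would record the basic properties of the proposed approximations. Fix a partial isometry $S:\ker(F)\to\ran(F)^\perp$ with $\rank(S)=-k$ and set $Y:=U+S(I-U^*U)$, regarding $S$ as an operator on $\ell^2$ vanishing on $\ker(F)^\perp$. Since $U$ is isometric on $\ker(F)^\perp$ with range $\cK$ and $S$ is isometric on a $(-k)$-dimensional subspace of $\ker(F)$ with range inside $\cK^\perp$, the two pieces have orthogonal domains and orthogonal ranges; hence $Y$ is a partial isometry, $Y-U=S(I-U^*U)$ has finite rank, and a direct computation of the intersections in the definition of the index gives $j(U^*U,Y^*Y)=k$, so $\{u_i+S(I-U^*U)e_i\}\in\cQ_\cF^k$ by Proposition \ref{compo con}. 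Because the domains and ranges are orthogonal, the cross term in the Hilbert--Schmidt norm vanishes and, using Lemma \ref{spectral},
$$\|F-Y\|_2^2=\|F-U\|_2^2+\|S(I-U^*U)\|_2^2=\sum_{i=1}^\alpha (s_i-1)^2-k,$$
which is independent of $S$. Since $-k\geq 1$ and both $\ker(F)$ and $\ran(F)^\perp$ are nonzero by \eqref{nonemp}, the set of such $S$ is infinite and distinct $S$ yield distinct frames; this accounts for the infinitely many approximations.

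The heart of the matter is the lower bound $\|F-X\|_2\geq\|F-Y\|_2$ for every $\{x_i\}\in\cQ_\cF^k$, with $Y$ fixed as above. I would adapt the dominance argument of Theorem \ref{canonico}. Writing $P=U^*U$, $Q=X^*X$ and $R=Y^*Y$, additivity of the index (Remark \ref{comp and sum}) gives $j(R,Q)=j(U^*U,Q)-j(U^*U,R)=k-k=0$, while $R-Q$ is Hilbert--Schmidt; hence by the Hilbert--Schmidt version of Theorem \ref{carey} (Remark \ref{holds for p2}) there is $L\in\cU_2(\ell^2)$ with $Q=LRL^*$. Let $\{g_i\}$ be the basis of $\ker(F)^\perp$ from Lemma \ref{spectral} and $\{e_1',\dots,e_{-k}'\}$ an orthonormal basis of the initial space of $S$, so together they form an orthonormal basis of $\ran(R)$. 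Setting $R_n:=\sum_{i=1}^n g_i\otimes g_i+\sum_{j=1}^{-k}e_j'\otimes e_j'$ and $Q_n:=LR_nL^*$, one computes that $(F-Y)R_n$ has singular values $|s_1-1|,\dots,|s_n-1|$ together with $-k$ values equal to $1$ (the latter from $S$), while $s(FR_n)=(s_1,\dots,s_n,0,\dots,0)$ and $s(XQ_n)=(1,\dots,1)$, both of length $n-k$; because $s(XQ_n)$ is constant this yields the identity $s((F-Y)R_n)=|s(FR_n)-s(XQ_n)|^\downarrow$. Theorem \ref{mirsky} then gives $\|(F-Y)R_n\|_{(m)}\leq\|FR_n-XQ_n\|_{(m)}$ for every $m$, and the same strong-convergence and Hilbert--Schmidt estimates as in Theorem \ref{canonico} (using $F=FR$, $X=XQ$, $FR_n\to F$, $XQ_n\to X$ strongly, and \cite[Thm. 6.3]{GK60}) let me pass to the limit $n\to\infty$ to obtain $\|F-Y\|_2\leq\|F-X\|_2$. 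This is the step I expect to be the main obstacle, precisely because one must enlarge the approximating projections $R_n$ to capture the initial space of $S$ and verify the singular-value identity in the presence of the extra $-k$ directions.

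Finally, to show that every symmetric approximation has the stated form I would argue as in the uniqueness part of Theorem \ref{canonico}. If $\{y_i\}$ is a symmetric approximation in $\cQ_\cF^k$ with synthesis operator $Y'$, then $Y'$ is a local minimum of $X\mapsto\|F-X\|_2^2$, so Lemma \ref{maher} gives $Y'^*F=F^*Y'$ and $Y'F^*=FY'^*$, and Lemma \ref{sim svd} produces orthonormal bases $\{v_i\}$, $\{h_i\}$ simultaneously diagonalizing $F$ and $Y'$. Defining $a_i$ from the singular values of $F$ and $\tilde b_i\in\{-1,0,1\}$ from $Y'$ as in Theorem \ref{canonico}, and noting that each $\{b_i\}\in\cB^k$ produces a partial isometry $X=\sum_i b_i\, h_i\otimes v_i\in\cQ_\cF^k$, the inequality $\|F-X\|_2^2\geq\|F-Y'\|_2^2$ ranging over all $\{b_i\}\in\cB^k$ shows that $\{\tilde b_i\}$ minimizes $f$ on $\cB^k$. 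By Lemma \ref{diagonal} (ii) this forces $\tilde b_i=1$ whenever $a_i\neq 0$ and $\tilde b_i=0$ off a set of exactly $-k$ indices in $\{i:a_i=0\}$ where $\tilde b_i=\pm1$. Translating back through the bases, the indices with $a_i\neq 0$ reassemble $U$, while the remaining $-k$ unimodular entries define a rank-$(-k)$ partial isometry from a subspace of $\ker(F)$ into $\ran(F)^\perp$; hence $Y'=U+S(I-U^*U)$ for such an $S$, as claimed.
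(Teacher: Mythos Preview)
Your proposal is correct and follows essentially the same approach as the paper's proof: the same enlargement of the approximating projections by the initial space of $S$ (your $R_n$ coincides with the paper's $P_n=\sum_{i=1}^n g_i\otimes g_i + P''$), the same singular-value identity combined with Theorem~\ref{mirsky} and the approximation argument from Theorem~\ref{canonico}, and the same reduction via Lemmas~\ref{maher}, \ref{sim svd}, and \ref{diagonal}~$ii)$ for the description of all minimizers. The preliminary computation of $\|F-Y\|_2^2$ you include is a slight addition not present in the paper, but otherwise the arguments are the same.
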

\begin{proof}
 Take an integer $k$ under the above assumptions, and let $\{ x_i\} \in \cQ_\cF^k$ be a Parseval frame in a subspace $\cL \subseteq \cH$. Denote by $X$ its synthesis operator, and set  $Q=X^*X$ and $P=U^*U$. Since $j(P,Q)=\dim(\ker(Q)\cap \ran(P))-\dim(\ker(P)\cap \ran(Q))=k$, it follows that $\dim(\ker(F))=\dim(\ker(P))\geq -k$. By Proposition \ref{compo con},  the second characterization of the connected components, yields $j(UU^*,XX^*)=\dim(\cK \cap \cL^\perp)-\dim(\cK^\perp \cap \cL)=k$. Thus, $\dim \cK^\perp=\dim \ran(F)^\perp\geq -k$. Hence it is always possible to construct partial isometries $S:\ker(F)\to \ran(F)^\perp$ satisfying $\rank(S)=-k$.

Now we consider the partial isometry $U_S:=U+S(I-P)$, which is the synthesis operator of the Parseval frame $\{ u_i+S(I-U^*U)e_i \}$. We express the initial projection of $U_S$ as $P'=P+P''$, where $P''=(I-P)S^*S(I-P)$. Noting that $P' -P$ is a finite rank operator, by Remark \ref{comp and sum} $ii)$ the following formula for the index is applicable: $j(P',Q)=j(P',P)+j(P,Q)=0$. According to the remark after Theorem \ref{carey},  there exists $L \in \cU_2(\ell^2)$ such that $Q=LP'L^*$. On the other hand, by Lemma \ref{spectral}, the operator $F$ may be expressed as
$$
F=\sum_{i=1}^\alpha s_i (Ug_i) \otimes g_i, \, \, \, \, 1 \leq \alpha \leq \infty.
$$ 
 We now assume that $\alpha=\infty$. The case where $\alpha<\infty$, i.e. $F$ a finite rank operator,  follows similarly and is easier (it is not necessary to approximate the projections $P'$ and $Q$ as below). For $n \geq 1$, we define the projections
$$
P_n=\sum_{i=1}^n g_i \otimes g_i\, + P''; \, \, \, \,Q_n:=L^*P_nL. 
$$
Observe that $\rank(P_n)=\rank(Q_n)=n-k$, $P_n\leq P'$, $Q_n\leq Q$, and $P_n \to P'$, $Q_n \to Q$ strongly. Since
$$
|(F-U_S)P_n|=\sum_{i=1}^n |s_i -1| g_i \otimes g_i + P'',
$$ 
it follows that 
\begin{align*}
s((F-U_S)P_n) & = (|s_1 - 1|, \ldots, |s_n-1|, \underbrace{1, \ldots, 1}_{-k})^\downarrow \\
& =| (s_1, \ldots, s_n,\underbrace{0, \ldots, 0}_{k} )^\downarrow - (\underbrace{1, \ldots, 1}_{n-k})|^\downarrow \\
& =|s(FP_n) - s(XQ_n)|^\downarrow. 
\end{align*}
 Using Theorem \ref{mirsky} and the same approximation argument of Theorem \ref{canonico}, we can conclude that 
$\{ u_i +S(I-P)e_i \}$ is a symmetric approximation of $\{ f_i \}$ in $\cQ_\cF^k$. 

It remains to show that every symmetric approximation of $\{ f_i \}$ in $\cQ_\cF^k$ ($k \in \mathbb{J}$, $k<0$) is given by that formula. Take 
$\{ y_i \}$ such a symmetric approximation. Its synthesis
operator $Y$  satisfies $\|F-Y\|_2 \leq \|F-X\|_2$, for all  partial isometries $X$ which are  synthesis operators of Parseval frames $\{ x_i\} \in \cQ_\cF^k$. Hence $Y$ is a local minimum of the differentiable map $X\mapsto \| F-X\|_2^2$.  As in the proof of the uniqueness part of Theorem \ref{canonico}, recall that Lemmas \ref{maher} and \ref{sim svd} can be used to reduce the problem to the diagonal case. By Lemma \ref{diagonal} $ii)$, we have that each minimizer $\{ b_i\}$ is defined by $-k$ numbers $i_1 , \ldots , i_{-k} \in \mathbb{A}^k$. Therefore,
$$
Y=\sum_{i=1}^\infty b_i \, h_i \otimes v_i =\sum_{i=1}^\alpha \, h_{r_i} \otimes v_{r_i} +\sum_{j=1}^{-k} \epsilon_j h_{i_j} \otimes v_{i_j}= U + \sum_{j=1}^{-k} \epsilon_j h_{i_j} \otimes v_{i_j}; \, \, \, \epsilon_j=\pm 1.
$$ 
Then, $S:=\left(\sum_{j=1}^{-k} \epsilon_j h_{i_j} \otimes v_{i_j}\right)|_{\ker(F)}: \ker(F)\to \ran(F)^\perp$ is a partial isometry of rank $-k$ such that 
$y_i=u_i+S(I-P)e_i$.
\end{proof}

\medskip

 In contrast to the previous result, symmetric approximations in $\cQ_\cF^k$ ($k>0$, $k \in \mathbb{J})$ do not always exist. 
 Below we prove  a necessary and sufficient condition for existence of symmetric approximation restricted to these connected components  in terms of the singular values of the synthesis operator.  These singular values are also useful to give a uniqueness criterion, and to the describe all the possible symmetric approximations.

\begin{teo}\label{k>0 aprox}
Let $\cF=\{ f_i \}$ be a frame in a subspace $\cK\subseteq \cH$. Let $F$ be its synthesis operator, $F=U|F|$ the polar decomposition and $\{ u_i\}=\{ Ue_i\}$ the canonical Parseval frame. Denote by $\{ s_i\}_1^\alpha$ ($1\leq \alpha \leq \infty$) the nonzero singular values of the operator $F$. Assume that there exists a quadratically close Parseval frame to $\{f_i \}$. Then for any integer  $1 \leq k \leq \alpha$ the following statements hold: 
\begin{itemize}
\item[i)] There exists a symmetric approximation of the frame $\{ f_i \}$ in $\cQ_\cF^k$  if and only if there are $k$  nonzero singular values $\{ s_{n_i}\}_1^k$   satisfying 
$s_{n_1}\leq  \ldots \leq s_{n_k}\leq s_j$,
 for all $j\neq n_1, \ldots , n_k$.
\item[ii)] Assume that the previous condition for existence holds, and define
$$
\mathbb{A}:= \{ \, j \, : \, 0<s_j < s_{n_k}  \, \}, \, \, \, \, \, \, \mathbb{B}:= \{ \, j \, : \, s_j = s_{n_k} \, \}. 
$$
Then all the symmetric approximations in $\cQ_\cF^k$ are given by  
\begin{equation}\label{k positivo}
\left\{ u_i- U(E_0 +E_1)e_i \right\},
\end{equation}
where $E_0$ is the orthogonal projection onto $\displaystyle{\oplus_{j \in \mathbb{A}}\ker(|F|-s_j I)}$ and $E_1$  is any orthogonal projection such that $\ran(E_1)\subseteq \ker(|F|-s_{n_k}I)$ and $\rank(E_1)=k-\#\mathbb{A}$. In particular, there is a unique symmetric approximation if and only if $\#\mathbb{A}+ \#\mathbb{B}=k$. Otherwise, there are infinitely many symmetric approximations.
\end{itemize}
\end{teo}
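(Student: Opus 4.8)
The plan is to mirror the two-stage strategy of Theorems~\ref{canonico} and~\ref{S aprox}: first produce a concrete symmetric approximation by a Lidskii-Mirsky-Wielandt approximation argument, and then reduce an arbitrary symmetric approximation to the diagonal problem of Lemma~\ref{diagonal}~$iii)$ via Lemmas~\ref{maher} and~\ref{sim svd}. Throughout I would write $P=U^*U$ for the projection onto $\ker(F)^\perp$, and for $\{x_i\}\in\cQ_\cF^k$ with synthesis operator $X$ set $Q=X^*X$, so that $j(P,Q)=k$ by Proposition~\ref{compo con}. The candidate I would take is the partial isometry $X_0:=U(I-E_0-E_1)=U-U(E_0+E_1)$, whose frame is $\{u_i-U(E_0+E_1)e_i\}$. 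Since $E_0+E_1\le P$ is a rank-$k$ projection with range in $\ker(F)^\perp$, one has $X_0^*X_0=P-(E_0+E_1)$ and hence $j(P,X_0^*X_0)=k$, so $X_0\in\cQ_\cF^k$.

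For the existence half of $i)$, assuming the $k$ lowest nonzero singular values $s_{n_1}\le\dots\le s_{n_k}\le s_j$ exist, I would set $R=E_0+E_1$ and choose the basis $\{g_i\}$ of Lemma~\ref{spectral} so that $R=\sum_{j=1}^k g_{n_j}\otimes g_{n_j}$. The auxiliary projection $P_0:=P-R$ satisfies $j(P_0,P)=-k$, so $j(P_0,Q)=0$ by Remark~\ref{comp and sum}, and Theorem~\ref{carey} (in the Hilbert-Schmidt version of Remark~\ref{holds for p2}) gives $L\in\cU_2(\ell^2)$ with $Q=LP_0L^*$. Taking $P_n=\sum_{i\le n}g_i\otimes g_i$ and $Q_n=LP_{0,n}L^*$ with $P_{0,n}=\sum_{i\le n,\,i\notin\{n_j\}}g_i\otimes g_i$ (so $\rank Q_n=n-k$), one computes $|(F-X_0)P_n|=\sum_{i\le n,\,i\notin\{n_j\}}|s_i-1|\,g_i\otimes g_i+\sum_{j}s_{n_j}\,g_{n_j}\otimes g_{n_j}$; since the $s_{n_j}$ are genuinely the smallest values among $\{s_1,\dots,s_n\}$, this yields the key identity $s((F-X_0)P_n)=|s(FP_n)-s(XQ_n)|^{\downarrow}$ with $s(XQ_n)=(\underbrace{1,\dots,1}_{n-k})$ padded by $k$ zeros. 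Theorem~\ref{mirsky} then gives $s((F-X_0)P_n)\smayo_w s(FP_n-XQ_n)$, hence $\|(F-X_0)P_n\|_2\le\|FP_n-XQ_n\|_2$ by the dominance property. Writing $FP_n=FP_{0,n}+FR$ and using $Q=LP_0L^*$ exactly as in Theorem~\ref{cuadratically close}, the non-Hilbert-Schmidt parts cancel and $FP_n-XQ_n\to F-X$ in $\|\cdot\|_2$, while $(F-X_0)P_n\to F-X_0$ because $F-X_0$ is Hilbert-Schmidt and supported in $\ran(P)$; letting $n\to\infty$ gives $\|F-X_0\|_2\le\|F-X\|_2$. (For $\alpha<\infty$ the truncation is unnecessary and the estimate is immediate.)

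For the converse in $i)$ and the description in $ii)$, I would take any symmetric approximation $\{y_i\}$ in $\cQ_\cF^k$ with synthesis operator $Y$. As a local minimum of $X\mapsto\|F-X\|_2^2$ it satisfies $Y^*F=F^*Y$ and $YF^*=FY^*$ by Lemma~\ref{maher}, so Lemma~\ref{sim svd} diagonalizes $F$ and $Y$ simultaneously. Setting $a_i$ equal to the singular values of $F$ (and $0$ off the range of $|F|$) and $\tilde b_i$ the diagonal of $Y$, the index correspondence gives $j(\{a_i\},\{\tilde b_i\})=j(P,Q)=k$, so $\{\tilde b_i\}\in\cB^k$, and minimality over $\cQ_\cF^k$ forces $\{\tilde b_i\}$ to minimize $f$ over $\cB^k$. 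Lemma~\ref{diagonal}~$iii)$ then does the work: a minimizer exists iff the $k$ lowest nonzero $a_i$ exist, proving the ``only if'' of $i)$; and every minimizer places its $k$ zeros on the directions strictly below $s_{n_k}$ (forced, giving $E_0$) together with $k-\#\mathbb{A}$ of the $s_{n_k}$-level directions (giving $\ran E_1\subseteq\ker(|F|-s_{n_k}I)$), so translating back gives $Y=U-U(E_0+E_1)$, i.e. \eqref{k positivo}. Conversely each such choice is a symmetric approximation by the first part, all having the common value $\sum_{i\notin\{n_j\}}(s_i-1)^2+\sum_j s_{n_j}^2$. Finally, the only freedom is the choice of a $(k-\#\mathbb{A})$-dimensional subspace of $\ker(|F|-s_{n_k}I)$, so the approximation is unique exactly when that eigenspace has dimension $k-\#\mathbb{A}$, that is $\#\mathbb{A}+\#\mathbb{B}=k$, and is otherwise parametrized by a positive-dimensional Grassmannian, giving infinitely many.

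The hard part I expect to be the existence argument for $k>0$. Unlike the $k=0$ and $k<0$ cases, the natural truncations $FP_n$ and $XQ_n$ have mismatched ranks $n$ and $n-k$, so the Mirsky identity must be arranged by padding $s(XQ_n)$ with $k$ zeros and using that the $s_{n_j}$ really are the smallest singular values; and one must check that, although $F$ and $U$ are not Hilbert-Schmidt, the difference $FP_n-XQ_n$ still converges to $F-X$ in $\|\cdot\|_2$. That cancellation is precisely what forces the use of the index-zero projection $P_0=P-(E_0+E_1)$ in Carey's theorem rather than $P$ itself, together with the splitting $FP_n=FP_{0,n}+FR$. A secondary subtlety is the passage from the finitely many diagonal minimizers of Lemma~\ref{diagonal}~$iii)$ to the continuum of operator solutions parametrized by subspaces of the critical eigenspace, which is what makes the ``unique or infinitely many'' dichotomy come out correctly.
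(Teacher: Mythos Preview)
Your proposal is correct and follows essentially the same route as the paper: construct the candidate $U-U(E_0+E_1)$, pass to the index-zero projection $P_0=P-(E_0+E_1)$ to invoke Carey's theorem, set up the truncations so that the Mirsky identity $s((F-X_0)P_n)=|s(FP_n)-s(XQ_n)|^\downarrow$ holds thanks to the minimality of the $s_{n_j}$, and then run the same $\|\cdot\|_2$-approximation argument; the converse and the classification in $ii)$ are likewise obtained via Lemmas~\ref{maher}, \ref{sim svd} and~\ref{diagonal}~$iii)$, with the same uniqueness criterion. The only cosmetic difference is bookkeeping: the paper indexes its truncation by the complementary indices $m_i$ and adds the fixed rank-$k$ block $P''=E_0+E_1$ separately, whereas you take $P_n=\sum_{i\le n}g_i\otimes g_i$ and split off $P_{0,n}$; the resulting singular-value computations and limits are identical.
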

\begin{proof}
We are assuming that there exists a quadratically close Parseval frame to $\{ f_i\}$, then by Proposition \ref{compo con} we have   $\cQ_\cF^k\neq \emptyset$ for any integer $k$ satisfying $1\leq   k  \leq n_2=\alpha$, where $\alpha$ is the number of nonzero singular values of $F$.  So it makes sense to look for symmetric approximations in these type of components.


\medskip

\noi $i)$ Assume that there exist $k$  singular values of the synthesis operator $F$ such that  $0<s_{n_1} \leq \ldots \leq s_{n_k}\leq s_j$, for all $j \neq n_1 , \ldots, n_k$, $s_j \neq 0$. We will prove that the Parseval frame defined in \eqref{k positivo} is a symmetric approximation. Suppose that the above sets are given by
$$
\mathbb{A}=\{ \, n_1, \ldots, n_t \, \}, \, \, \, \, \, \, \, \mathbb{B}=\{ \, n_{t+1}, \ldots , n_k \, \} \cup \{ \, n_i' \, : \, 1\leq i \leq \gamma \, \},
$$ 
where $0\leq t <k$ and $0 \leq \gamma \leq \infty$. Here we interpret that $\mathbb{A}=\emptyset$ if $t=0$, and $\{ \, n_i' \, : \, 1\leq i \leq \gamma \, \}=\emptyset$ if $\gamma=0$. Note that 
$1$ is the unique nonzero singular value which could have infinite multiplicity (this happens when $\gamma = \infty$). 

Let $E_1$ be an orthogonal projection such that $\ran(E_1)\subseteq \ker(|F|-s_{n_k}I)$ and $\rank(E_1)=k-t$.  This projection can be written as 
$
E_1=\sum_{i=1}^{k-t} v_i \otimes v_i,
$
where $\{ v_i \}_1^{k-t}$ is an orthonormal basis of $\ran(E_1)$. An examination of the proof of Lemma \ref{spectral} combined with the fact 
$v_i \in \ker(|F|-s_{n_k}I)$ for all $i=1, \ldots , k-t$, yields that we may choose an orthonormal basis $\{ g_i\}_1^\alpha$ of $\ker(F)^\perp$ such that 
$$
F=\sum_{i=1}^\alpha s_i (Ug_i)\otimes g_i
$$ 
and $g_{n_{t+1}}=v_1, \ldots , g_{n_k}=v_{k-t}$. Now note that the projection $E_0$ defined in the statement is given by 
$
E_0=\sum_{i=1}^t g_{n_i}\otimes g_{n_i}.
$

Pick a Parseval frame $\{ x_i\}$ in $\cQ_\cF^k$, $k>0$, with synthesis operator $X$. Set  $Q=X^*X$, $P=U^*U$ and   $\{ m_1 , m_2 , \ldots \}=\N\setminus \{ n_1, \ldots, \, n_k \, \}$. We assume that $\alpha=\infty$ (the case $\alpha < \infty$ follows similarly). We define the following projections:
$$
P':=\sum_{i=1}^\infty g_{m_i} \otimes g_{m_i} \, ; \, \, \,  \, \, \, \, \, P'':=\sum_{i=1}^k g_{n_i}\otimes g_{n_i}=E_0 + E_1 \, .
$$
Thus, $P=P'+P''$. Again noting that $P-P'$ is a finite rank operator, $j(P',Q)=j(P',P)+ j(P,Q)=-k+k=0$. From Remark \ref{holds for p2},  there is a unitary 
$L \in \cU_2(\ell^2)$ such that $Q=LP'L^*$. Set
$$
P_n:= \sum_{i=1}^n g_{m_i} \otimes g_{m_i}  \, ; \, \, \,  \, \, \, \, \, Q_n:=LP_nL^*.
$$
We have that $\rank(P_n)=\rank(Q_n)=n$, $P_n\leq P'$, $Q_n\leq Q$, and $P_n \to P'$, $Q_n \to Q$ strongly.
Note that the partial isometry
$$
U_k=\sum_{i=1}^\infty (Ug_{m_i})\otimes g_{m_i}
$$ 
is the synthesis operator of the Parseval frame defined in \eqref{k positivo}.    
The assumption on the $k$ lowest singular values is used now to write in non increasing order  the following vector of singular values:
$$
s(F(P_n+P''))=((s_{m_1}, \ldots, s_{m_n})^\downarrow, s_{n_k}, \ldots, s_{n_1}).
$$
Note that
$$
s(XQ_n)=(\underbrace{1, \ldots, 1}_{n}, \underbrace{0, \ldots, 0}_{k}),
$$
which gives
\begin{align*}
s((F-U_k)(P_n+P'')) & = (|s_{m_1}-1|, \ldots, |s_{m_n} -1|, s_{n_k}, \ldots , s_{n_1} )^\downarrow \\
&=|s(F(P_n+P''))-s(XQ_n)|^\downarrow \, .
\end{align*}
Now the result follows by applying Theorem \ref{mirsky} and the usual approximation argument. 

\medskip

For the converse, we observe that the proof can be reduced to the diagonal case as in the uniqueness part of Theorem \ref{canonico}, and then we can use  Lemma \ref{diagonal} $iii)$ to derive the result. 

\medskip
\noi $ii)$ We have proved that the Parseval frame defined in \eqref{k positivo} is a symmetric approximation of $\{ f_i \}$ in $\cQ_\cF^k$.
The fact that every such symmetric approximation has this form follows again  by an application of  Lemmas \ref{maher}, \ref{sim svd} and \ref{diagonal} $iii)$ as in the uniqueness part of Theorem \ref{canonico}. Note that if there is a unique symmetric approximation, then there is only one orthogonal projection $E_1$ such that $\rank(E_1)=k - \#\mathbb{A}$ and $\ran(E_1)\subseteq \ker(|F|-s_{n_k}I)$. 
It is not difficult to see that 
$E_1$ must be the orthogonal projection onto $\ker(|F|-s_{n_k}I)$. Hence we obtain
$\#\mathbb{B}=\dim \ker(|F|-s_{n_k}I)=\rank(E_1)=k-\#\mathbb{A}$. 
Conversely, suppose that $\#\mathbb{A} + \#\mathbb{B}=k$, then the projection $E_1$ onto  $\ker(|F|-s_{n_k}I)$ is the unique orthogonal projection satisfying the conditions $\rank(E_1)=k - \#\mathbb{A}$ and $\ran(E_1)\subseteq \ker(|F|-s_{n_k}I)$. Since every symmetric approximation has the form \eqref{k positivo}, we get that there is a unique symmetric approximation. 

In the case where symmetric approximation is  not unique, it follows that $k-\#\mathbb{A}< \#\mathbb{B}=\dim \ker(|F|-s_{n_k}I)$. Therefore there are infinitely many orthogonal projections $E_1$ satisfying $\rank(E_1)=k - \#\mathbb{A}$ and $\ran(E_1)\subseteq \ker(|F|-s_{n_k}I)$, and consequently, there are infinitely many symmetric approximations. 
\end{proof}

\section{Global symmetric approximation}\label{section global}

Let $\cF=\{ f_i \}$ be a frame in a subspace $\cK\subseteq \cH$.   In this section we discuss existence and uniqueness of symmetric approximations of $\{ f_i \}$ in the set $\cQ_\cF$  of all Parseval frames quadratically close to $\{ f_i\}$. 

\begin{rem}\label{distance}
From the proofs of Theorems \ref{canonico}, \ref{S aprox} and \ref{k>0 aprox}, when there exists a symmetric approximation in $\cQ_\cF^k$, the $\ell^2$-distance to these connected components can be expressed in terms of the nonzero singular values $\{ s_i\}_1^\alpha$ of the synthesis operator $F$:
\begin{itemize}
\item $k=0$: $d^2(\{ f_i\}, \cQ_\cF^0)=\sum_{i=1}^\alpha (s_i-1)^2$.
\item $k<0$: $d^2(\{ f_i\}, \cQ_\cF^k)=\sum_{i=1}^\alpha (s_i-1)^2 +  1 + \ldots + 1$, where the number of $1$'s is $-k$.
\item $k>0$: $d^2(\{ f_i\}, \cQ_\cF^k)=\sum_{i \neq n_1 , \ldots, \, n_k } (s_i-1)^2 + \sum_{i=1}^k s_{n_i}^2$.
\end{itemize}
\end{rem}

Next we need to recall a result of best approximation  by partial isometries in the finite dimensional setting. 

\begin{rem}\label{finite dim case}
Let $A$ be an $m \times n$ matrix with nonzero singular values 
$$
s_1 \geq \ldots \geq s_p > 1/2 \geq s_{p+1} \geq \ldots \geq s_q, \, \, \,  \, q\leq \min\{ m, \, n \}. 
$$ 
Suppose that $A=V\Sigma W^*$ is a singular  value decomposition of $A$, i.e. $V, W$ are unitary matrices and $\Sigma$ is a diagonal matrix with the singular values of $A$ arranged in non increasing order. Define the following partial isometry $$S_p=V\begin{pmatrix} I_p & 0 \\ 0 & 0 \end{pmatrix}W^*,$$ where $I_p$ is the $p \times p$ identity matrix. 
The following estimate was proved in \cite[Thm. 3.7]{AC16}:
$$
\| A- S_p\|_2 \leq \| A- Y\|_2
$$
for any $m \times n$ partial isometry $Y$. 
\end{rem}

Now we can give our result on existence and uniqueness of global symmetric approximations. 

\begin{teo}\label{global thm}
Let $\cF=\{ f_i \}$ be a frame in a subspace $\cK\subseteq \cH$,  and let $\{ u_i\}$ be its canonical Parseval frame. Suppose that there exists a Parseval frame quadratically close to $\{ f_i\}$. Let $F$ be the synthesis operator associated to $\{ f_i\}$,  $F=U|F|$ its polar decomposition and $\{s_i\}_1^\alpha$ its nonzero singular values. Define $r:=\# \{ \, j \, : \, s_j \leq 1/2\,\}$, and consider $E$ the orthogonal projection onto   $\oplus_{j=1}^r \ker(|F|-s_{n_j}I)$, where $s_{n_1} \leq \ldots \leq s_{n_r}\leq 1/2$. 
Then the following estimate
$$
\sum_{i=1}^\infty \| f_i - (u_i - UEe_i) \|^2 \leq  \sum_{i=1}^\infty \| f_i - x_i \|^2
$$
is valid for all Parseval frames  $\{ x_i \} \in \cQ_\cF$. Moreover, the Parseval frame $\{u_i - UEe_i \}$ is the unique symmetric approximation in $\cQ_\cF$ if and only if $\#\{ \, j \, : \, s_j=1/2\, \}\leq 1$.  If this condition fails, then there are infinitely many symmetric approximations.
\end{teo}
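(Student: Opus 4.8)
The plan is to reduce the global problem to a discrete minimization over the connected components $\cQ_\cF^k$, $k\in\mathbb{J}$, using the distance formulas already recorded in Remark \ref{distance}. Since $\cQ_\cF$ is the disjoint union of the $\cQ_\cF^k$ and each of Theorems \ref{canonico}, \ref{S aprox}, \ref{k>0 aprox} exhibits a symmetric approximation realizing the component distance $d(\{f_i\},\cQ_\cF^k)$ whenever one exists, a global symmetric approximation is exactly a minimizer lying in a component $\cQ_\cF^{k^*}$ for which $d^2(\{f_i\},\cQ_\cF^{k^*})=\min_k d^2(\{f_i\},\cQ_\cF^k)$. So the first task is to locate this minimizing index (or indices).

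Next I compare the three cases of Remark \ref{distance} against the baseline $d^2(\{f_i\},\cQ_\cF^0)=\sum_i(s_i-1)^2=:D$. The negative components satisfy $d^2(\{f_i\},\cQ_\cF^k)=D+(-k)>D$, so they are never optimal and can be discarded. For $k>0$, whenever $\cQ_\cF^k$ admits an approximation (that is, when the $k$ lowest singular values exist), a one-line rearrangement of the formula gives
$$
d^2(\{f_i\},\cQ_\cF^k)=D+\sum_{j=1}^k(2s_{n_j}-1),
$$
where $s_{n_1}\le\cdots\le s_{n_k}$ are the $k$ smallest singular values. The increment $2s_{n_j}-1$ is negative exactly when $s_{n_j}<1/2$, zero when $s_{n_j}=1/2$, and positive when $s_{n_j}>1/2$; since $s_i\to 1$, only finitely many singular values lie below any threshold $<1$, so $r=\#\{j:s_j\le 1/2\}$ is finite. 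Hence the sum is minimized by absorbing exactly the singular values $\le 1/2$: the minimal value $D+\sum_{j:s_j\le 1/2}(2s_j-1)$ is attained for every index $k$ with $\#\{j:s_j<1/2\}\le k\le r$, and in particular at $k=r$. This is the precise infinite-dimensional counterpart of the finite-dimensional threshold rule in Remark \ref{finite dim case}, where one keeps the singular directions above $1/2$ and discards those below.

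With the optimal component pinned down, I identify the minimizer explicitly by specializing Theorem \ref{k>0 aprox} to $k=r$: the symmetric approximation in $\cQ_\cF^r$ removes the $r$ lowest singular directions, so its synthesis operator is $U-UE=U(I-E)$, where $E$ is the orthogonal projection onto $\oplus_{j=1}^r\ker(|F|-s_{n_j}I)$. This is precisely the Parseval frame $\{u_i-UEe_i\}$, and because it realizes the global minimum of $d^2(\{f_i\},\,\cdot\,)$ over all components, the stated estimate holds for every $\{x_i\}\in\cQ_\cF$. When no singular value is $\le 1/2$ one has $r=0$, $E=0$, and the statement collapses to Theorem \ref{canonico}.

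The main obstacle, and the heart of the uniqueness assertion, is the bookkeeping at the threshold value $1/2$. A singular direction with $s_j=1/2$ contributes the same amount, $(1/2-1)^2=(1/2)^2$, whether it is kept or discarded; equivalently its increment $2s_j-1$ vanishes. Thus every index $k$ in the range above yields the same minimal distance, and by Lemma \ref{maher} together with Lemma \ref{sim svd} every global minimizer is simultaneously diagonal with $F$, hence is determined by which subprojection of the eigenspace $\ker(|F|-(1/2)I)$ it discards. Counting minimizers therefore reduces to counting orthogonal subprojections of this eigenspace, governed by its dimension $\#\{j:s_j=1/2\}$: the subtle step is to reconcile the passage across distinct optimal components $\cQ_\cF^k$ (via Proposition \ref{compo con}) with the within-component freedom quantified by Theorem \ref{k>0 aprox}(ii) and Lemma \ref{diagonal}. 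When this $1/2$-eigenspace is small enough the projection $E$ is forced, pinning down $\{u_i-UEe_i\}$ and giving uniqueness under the stated criterion on $\#\{j:s_j=1/2\}$; otherwise the freedom in choosing the discarded subprojection produces a continuum of symmetric approximations.
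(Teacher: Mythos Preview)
Your approach is essentially the same as the paper's: decompose $\cQ_\cF$ into the components $\cQ_\cF^k$, use the distance formulas of Remark \ref{distance}, and pin down the optimal index via the increments $2s_{n_j}-1$. The negative-$k$ and $0\le k\le r$ comparisons are handled correctly, and the identification of the minimizer $\{u_i-UEe_i\}$ via Theorem \ref{k>0 aprox} is right.

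However, there is a genuine gap in the case $k>r$. Your argument computes $d^2(\{f_i\},\cQ_\cF^k)=D+\sum_{j=1}^k(2s_{n_j}-1)$ only ``whenever $\cQ_\cF^k$ admits an approximation (that is, when the $k$ lowest singular values exist)'', and then silently concludes the global minimum is at $k=r$. But for $k>r$ the $k$ lowest singular values need \emph{not} exist: take for instance nonzero singular values $\{1+1/n:n\ge 1\}$, so $r=0$ and for every $k\ge 1$ there is no smallest singular value; by Theorem \ref{k>0 aprox}(i), no symmetric approximation exists in $\cQ_\cF^k$, yet $\cQ_\cF^k\neq\emptyset$ by Proposition \ref{compo con}. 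In such cases the formula from Remark \ref{distance} is unavailable, the infimum $d(\{f_i\},\cQ_\cF^k)$ is not attained, and you have given no bound on $\|F-X\|_2$ for $\{x_i\}\in\cQ_\cF^k$. Your sentence ``because it realizes the global minimum of $d^2(\{f_i\},\cdot)$ over all components'' therefore does not follow.

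The paper treats precisely this case by a different device: rather than computing $d(\{f_i\},\cQ_\cF^k)$, it bounds $\|F-X\|_2$ directly for each $X$ in such a component. It writes $j(P',Q)=0$ for a suitable finite-rank modification $P'$, conjugates by $L\in\cU_2(\ell^2)$, and applies the \emph{finite-dimensional} best-approximation result of Remark \ref{finite dim case} to the truncations $F(P_n+E+E')$ and $XQ_n$, then lets $n\to\infty$. You mention Remark \ref{finite dim case} only as an analogy; in the paper it is an essential ingredient that closes exactly the gap your argument leaves open. Once that case is handled, the uniqueness bookkeeping you outline (the role of the $1/2$-eigenspace, the range of optimal $k$, and the within-component count from Theorem \ref{k>0 aprox}(ii)) matches the paper.
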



\begin{proof}
 We begin by noting that we have assumed $\cQ_\cF\neq\emptyset$, so it makes sense to look for  symmetric approximations. Next note that $r<\infty$. This follows  by Lemma \ref{spectral},  where we have proved that $s_i \to 1$ when $\alpha=\infty$.  According to Proposition \ref{compo con}, we can write as a disjoint union of connected components $\cQ_\cF=\bigcup_{k \in \mathbb{J}}\cQ_\cF^k$, where $\mathbb{J}\subseteq \mathbb{Z}$ is an infinite subset.  
From Theorem \ref{k>0 aprox}, or Theorem \ref{canonico} when $r=0$, we know that the Parseval frame $\{ u_i - UEe_i \}$ is a symmetric approximation of $\{ f_i\}$ in $\cQ_\cF^r$, that is,
$$
d (\{ f_i\}, \cQ_\cF^r)=d(\{ f_i\},\{ u_i - UEe_i \}).
$$ 
Now we divide the proof into three cases. We will show that the $\ell^2$-distance of the frame $\{ f_i \}$ to $\cQ^r_\cF$ is lower than the $\ell^2$-distance to the other components.

\medskip

In the first case we compare with components of the form  $\cQ_\cF^k$, $k<0$. By Theorem \ref{S aprox}, the existence of symmetric approximations to these components is guaranteed. Using this fact and Remark \ref{distance}, we have $d (\{ f_i\}, \cQ_\cF^r)< d(\{ f_i\}, \cQ_\cF^k)$.

In the second case, we  compare with components  $\cQ_\cF^{k}$, $0 \leq k<r$. According to Theorem \ref{k>0 aprox}, there  exists a symmetric approximation restricted to these components by the assumption $k<r$. Using  that $a^2\leq(a-1)^2$ for $0<a\leq 1/2$ and Remark \ref{distance},  we have
$
d (\{ f_i\}, \cQ_\cF^r) \leq d (\{ f_i\}, \cQ_\cF^k)
$. Furthermore, equality holds if and only if $s_{n_{k+1}}=\ldots =s_{n_{r}}=1/2$.

In the last case, we  compare with components  $\cQ_\cF^{k}$, $k>r$. If $\alpha<\infty$, then there exist symmetric approximations, and the proof is similar to the previous cases. In the case where $\alpha=\infty$,  we might not have the existence of symmetric approximations restricted to these components. Equivalently, the distance of the frame $\{ f_i \}$ to $\cQ_\cF^{k}$ might not be attained. 
  We have $\cQ_\cF^k\neq \emptyset$, so there exist $k$ nonzero singular values $s_{n_1}\leq \ldots \leq s_{n_r}\leq 1/2 < s_{n_{r+1}} \leq \ldots \leq s_{n_k}$ of the  operator $F$.  Pick $\{ g_{n_i} \}_1^k$ orthonormal vectors satisfying $|F|g_{n_i}=s_{n_i}g_{n_i}$, $i=1, \ldots, k$.
Note that in the proof of Lemma \ref{spectral} we may complete this  basis to an orthonormal basis $\{ g_i \}$ of $\ker(F)^\perp$ such that 
$
F=\sum_{i=1}^\infty s_i (Ug_i)\otimes g_i $.

Take a Parseval frame $\{ x_i \} \in \cQ^k_\cF$ with synthesis operator $X$. Set  $Q=X^*X$,   $\{ m_1 , m_2 , \ldots \}=\N\setminus \{ n_1, \ldots, \, n_k \, \}$ and 
$$
P'=\sum_{i=1}^\infty g_{m_i}\otimes g_{m_i}\, , \, \, \, \, \, \, \, \, E'=\sum_{i=r+1}^k g_{n_i}\otimes g_{n_i}.
$$
Note that $E=\sum_{i=1}^r g_{n_i}\otimes g_{n_i}$. Since   $j(P',Q)=0$ and $P'-Q$ is Hilbert-Schmidt,  there is a unitary $L \in \cU_2(\ell^2)$ such that 
$Q=LP'L^*$. We define 
$$
P_n=\sum_{i=1}^n g_{m_i}\otimes g_{m_i}, \, \, \, \, \, \, Q_n=LP_nL^*,
$$
and 
$
U_r=U(P'+E'),
$
which is the synthesis operator of the Parseval frame $\{ u_i - UEe_i \}$.
We apply the result stated in  Remark \ref{finite dim case} with $p=n+k-r$, $A=F(P_n+E+E')$, $S_{n+k-r}=U(P_n+E')=U_r(P_n+E+E')$ and $Y=XQ_n$. Thus, we obtain
$$
\|(F-U_r)(P_n+E+ E')\|_2 \leq \| F(P_n+E+E')-XQ_n\|_2.
$$
Letting $n \to \infty$, we find that 
$
\|F-U_r\|_2 \leq \|F-X\|_2 
$ for any partial isometry $X \in \cQ_\cF^k$. Hence we obtain the desired inequality
$d (\{ f_i\}, \cQ_\cF^r) \leq d (\{ f_i\}, \cQ_\cF^k)$. From these three cases, we conclude that 
$\{ u_i - UEe_i \}$ is a symmetric approximation of $\{ f_i\}$ in $\cQ_\cF$.

\medskip

Now we turn to the uniqueness assertion. If we have $l:=\#\{ \, j \, : \, s_j=1/2\, \}> 1$, then $d (\{ f_i\}, \cQ_\cF^r)=d (\{ f_i\}, \cQ_\cF^k)$ for $k=r-l+1, \ldots, r-1$. Using the notation of Theorem \ref{k>0 aprox}, we have $\#\mathbb{A}=r-l$ and $\#\mathbb{B}=l$, so 
$k<\#\mathbb{A}+\#\mathbb{B}$, and then there are infinitely many symmetric  approximation belonging to each $\cQ_\cF^k$. 
 Conversely, suppose that $\#\{ \, j \, : \, s_j=1/2\, \}\leq 1$. Assume that $\{ x_i\}$ is another symmetric approximation in $\cQ_\cF$. Then $\{ x_i\}$ is in particular a symmetric approximation in $\cQ_\cF^k$ for some $k \in \mathbb{J}$. We have seen that $d (\{ f_i\}, \cQ_\cF^r)< d(\{ f_i\}, \cQ_\cF^k)$ for $k <0$. In the case where $k\neq r$, $k\geq 0$, the equality $d (\{ f_i\}, \cQ_\cF^r) = d(\{ f_i\}, \cQ_\cF^k)$ implies   $s_{n_{k+1}}=\ldots =s_{n_r}=1/2$ if $k<r$. On the other hand, if  $r<k$, then  $d (\{ f_i\}, \cQ_\cF^r) = d(\{ f_i\}, \cQ_\cF^k)$ is not possible, since it would give the contradiction $(k-r)/2=s_{n_{r+1}}+ \ldots + s_{n_{k}}>(k-r)/2$.
 Thus, we must have that $\{ x_i \}$ is a symmetric approximation in $\cQ_\cF^r$. But $1/2$ has multiplicity at most one as singular value of $F$, so by Theorem \ref{k>0 aprox} $ii)$, $\{ x_i \}$ is the unique symmetric approximation of $\{ f_i\}$ in $\cQ_\cF^r$.
Hence $\{ x_i \}=\{ u_i-UEe_i \}$. 
\end{proof}

\begin{rem}
We emphasize that if $r=0$, then $E=0$, and thus the canonical Parseval frame $\{ u_i\}$ associated to $\{ f_i\}$ is the unique global symmetric approximation. 
On the other hand, observe that in the last part of the proof we actually exhibit all the possible symmetric approximations when $l=\#\{ \, j \, : \, s_j=1/2\, \} > 1$. In addition to the global symmetric approximation in the statement, there are infinitely many global symmetric approximations belonging to each component $\cQ_\cF^k$, $k=r-l+1, \ldots, r-1$, which have the form given in Theorem \ref{k>0 aprox}.
\end{rem}


\begin{rem}
Symmetric approximation can be thought as a problem of best approximation of closed range operators by partial isometries in  the Hilbert-Schmidt norm. We point out that all the results concerning the existence of best approximation proved in this paper can be generalized to the setting of symmetrically-normed ideals. However,  uniqueness results can be extended only to the $p$-Schatten ideals ($1<p<\infty$), where Lemma \ref{maher} still holds (see \cite[Thm. 4.1]{M89}). 
\end{rem}

\medskip

\subsection*{Acknowledgment}
I am grateful to  Jorge Antezana  for  several helpful discussions.   
This research was supported by Grants CONICET (PIP 2016 112201), FCE-UNLP (11X681) and ANPCyT (2015 1505).


                                               

\bigskip

{\sc (Eduardo Chiumiento)} {Departamento de de Matem\'atica, FCE-UNLP, Calles 50 y 115, 
(1900) La Plata, Argentina  and Instituto Argentino de Matem\'atica, `Alberto P. Calder\'on', CONICET, Saavedra 15 3er. piso,
(1083) Buenos Aires, Argentina.}     
                                               
\noi e-mail: {\sf eduardo@mate.unlp.edu.ar}   

\end{document}